\newcommand{\SL}{\mathrm{SL}}
\newcommand{\SU}{\mathrm{SU}}
\newcommand{\Sp}{\mathrm{Sp}}
\newcommand{\PSp}{\mathrm{PSp}}
\newcommand{\PSL}{\mathrm{PSL}}
\newcommand{\PSU}{\mathrm{PSU}}
 \newcommand{\widesim}{
  \mathrel{{\scalebox{1.5}[1]{$\sim$}}}
}
\newcommand{\reducesize}[2]{%
  \mathbin{
    \ooalign{
      \raisebox
       {.4ex}
          {$#1\widesim$}
      \cr 
      \hidewidth
      \raisebox
        {-.6ex}
        {\scalebox
          {.75}
          {$#1#2$}
        }
      \hidewidth
    }
  }
}
\newcommand{\stb}[1]{\mathpalette\reducesize{#1}}
\newcommand{\CC}{\mathcal{C}}
\newcommand{\DD}{\mathcal{D}}
\newcommand{\Fq}{\mathbb{F}_q}
            \def\e{\epsilon} 
            \def\la{\langle} 
            \def\ra{\rangle} 
            \def\a{\alpha} 
             \def\b{\beta}
            \def\e{\epsilon} 
            \def\O{\Omega} 
            \def\l{\lambda}
            \def\F{\mathbb{F}}
            \def\d{\delta} 
     \def\D{\Delta}
     \def\G{\Gamma} 
     \def\no{\noindent}
\newtheoremstyle{pedrodef}{}{}{}{}{\bfseries}{.}{ }{\thmname{#1}\thmnumber{ #2}\thmnote{ (#3)}}
\newtheorem{theorem}{Theorem} 
\newtheorem{thm}{Theorem}[section] 
\newtheorem{prop}[thm]{Proposition} 
\newtheorem{lem}[thm]{Lemma}
\theoremstyle{pedrodef}
\newtheorem{rem}[thm]{Remark} 
\newtheorem{defn}[thm]{Definition} 
\newtheorem{ex}[thm]{Example} 
\begin{document}

           \title{The binary actions of simple groups of Lie type of characteristic 2}

\date{\today}

\begin{abstract}
Let $\mathcal{C}$ be a conjugacy class  of involutions in a group $G$. We study the graph $\Gamma(\mathcal{C})$ whose vertices are elements of $\mathcal{C}$ with $g,h\in\mathcal{C}$ connected by an edge if and only if $gh\in\mathcal{C}$. For $t\in \mathcal{C}$, we define the \emph{component group} of $t$ to be the subgroup of $G$ generated by all vertices in $\Gamma(\mathcal{C})$ that lie in the connected component of the graph that contains $t$.

We classify the component groups of all involutions in simple groups of Lie type over a field of characteristic $2$. We use this classification to partially classify the transitive binary actions of the simple groups of Lie type over a field of characteristic $2$ for which a point stabilizer has even order. The classification is complete unless the simple group in question is a symplectic or unitary group.
\end{abstract}
\author{Nick Gill}
\address{School of Mathematics and Statistics, The Open University, Walton Hall, Milton Keynes, MK7 6AA, UK}
\email{nick.gill@open.ac.uk}

\author{Pierre Guillot}
\address{IRMA, 7 rue René Descartes, 67084 Strasbourg, France}
\email{guillot@math.unistra.fr}

\author{Martin W. Liebeck}
\address{Department of Mathematics, Imperial College London, London, SW7 2AZ, UK}
\email{m.liebeck@imperial.ac.uk }

\maketitle

\section{Introduction}

Let $G$ be a finite group acting on a finite set $\Omega$. Let $I, J \in \Omega^n$ be $n$-tuples of elements of $\Omega$, for some $n \ge 2$, written $I=(I_1, \ldots, I_n)$ and $J= (J_1, \ldots, J_n)$. For $r \le n$, we say that $I$ and $J$ are {\em $r$-related}, and we write $I \stb{r} J$, when for each choice of indices $1 \le k_1 < k_2 < \cdots < k_r \le n$, there exists $g \in G$ such that $I_{k_i}^g = J_{k_i}$ for all $i$.

We say that the action of $G$ on $\Omega$ is \emph{binary} if, for all $n\ge 2$ and all $I,J\in\Omega^n$,  $I\stb{2} J$ implies that $I\stb{n} J$. The study of binary actions is motivated by considerations in model theory \cite{cherlin_martin, cherlin1}.

In recent years there has been substantial progress in our understanding of binary actions. We have a full classification of the primitive binary permutation groups \cite{gls_binary}. We have also started to study the binary actions of the simple groups; in particular, we now have a full classification of the binary actions of the alternating groups \cite{ggAltBin, ggLieBin}.

The first aim of this paper is to contribute to the classification of the binary actions of the simple groups by proving a result for simple groups of Lie type over a field of characteristic 2. Our main result in this direction is the following.

\begin{theorem}\label{t: char 2}
 Let $G = G(q)$ be a simple group of Lie type over $\F_q$, where $q = 2^a$. Suppose that $G$ has a binary action on a set $\Omega$ and that there exists $\omega\in\Omega$ such that $G_\omega$, the stabilizer in $G$ of $\omega$, is a proper subgroup of $G$ of even order. Then one of the following holds.
 \begin{itemize}
  \item[{\rm (i)}] $G={^2B_2}(2^{2a+1})$ with $a\geq 1$ and $G_\omega$ is the centre of a Sylow $2$-subgroup of $G$; 
  \item[{\rm (ii)}] $G=\Sp_{2n}(2^a)$ or $\PSU_{n}(2^a),$ with $n\geq 2$, and $G_\omega$ contains the centre of a long root subgroup of $G$;
  \item[{\rm (iii)}] $G=\Sp_{4}(2^a)$ with $a\geq 2$ and $G_\omega$ contains a short root subgroup of $G$.
 \end{itemize}
\end{theorem}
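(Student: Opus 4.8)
The plan is to deduce the theorem from the classification of component groups of involutions in the groups $G(q)$ --- the first main result of the paper --- together with the standard translation between binary actions and the graphs $\Gamma(\CC)$. I therefore take for granted the following summary of that classification for $q=2^a$: for an involution $t$ of $G=G(q)$ with class $\CC=t^G$, the component group of $t$ is the whole of $G$, \emph{unless} $(G,t)$ is exceptional, in which case the component group is $Z(Q)$, the centre of a Sylow $2$-subgroup, when $G={}^2B_2(2^{2a+1})$; the centre of a long root subgroup when $G$ is symplectic or unitary and $t$ is a long root element; and a short root subgroup when $G=\Sp_4(2^a)$ with $a\ge 2$ and $t$ is a short root element.

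\textbf{Step 1: the graph dictionary.} I would first record the elementary principle that if $G$ acts binarily on $\Omega$ and $t$ is an involution in $G_\omega$, then $G_\omega$ contains the component group of $t$ (relative to $\CC=t^G$). Since the restriction of a binary action to an invariant subset is again binary, one may pass to the orbit $\omega^G$ and assume the action transitive. It then suffices to prove: if $s\in\CC\cap G_\omega$ and $s'$ is a neighbour of $s$ in $\Gamma(\CC)$, then $s'\in G_\omega$ --- for then the whole connected component of $t$ lies in $G_\omega$, and hence so does the subgroup it generates. The one-step claim is the familiar ``beautiful subset'' argument: $s,s'$ commute and $ss'\in\CC$, and feeding a short tuple built from $\omega$, $\omega^{s'}$ and a point moved by $ss'$ into the definition of binariness forces $s'$ to fix $\omega$; this is of the same kind as the arguments used for primitive binary groups in \cite{gls_binary} and going back to \cite{cherlin_martin}.

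\textbf{Step 2: the reduction, and the Suzuki sharpening.} Suppose $G$ has a binary action with $G_\omega$ a proper subgroup of even order. Then $G_\omega$ contains an involution $t$, and by Step 1 it contains the component group of $t$. If $t$ is not exceptional, that component group is $G$, so $G_\omega=G$, a contradiction. Hence every involution of $G_\omega$ is of exceptional type; this already forces $G$ to be ${}^2B_2(2^{2a+1})$, $\Sp_{2n}(2^a)$, $\PSU_n(2^a)$ or $\Sp_4(2^a)$, and forces $G_\omega$ to contain, respectively, $Z(Q)$, the centre of a long root subgroup, or a short root subgroup. For the classical cases this is conclusions (ii) and (iii); what remains is to chase the low-rank coincidences --- $\Sp_2(q)\cong\PSL_2(q)\cong\PSU_2(q)$, the fact that $\Sp_4(2^a)$ falls under both (ii) and (iii), and the non-simplicity of $\Sp_4(2)$ and $\PSU_3(2)$ --- so that the ranges $n\ge2$ and $a\ge2$ in (ii)--(iii) are exactly right. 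In the Suzuki case one must still sharpen ``$G_\omega\supseteq Z(Q)$'' to ``$G_\omega=Z(Q)$'': here each involution lies in the centre of a \emph{unique} Sylow $2$-subgroup, so any involution $u\in G_\omega$ drags $Z(Q_u)$ into $G_\omega$ by Step 1; but $Z(Q)$ and $Z(Q_u)$ generate $G$ whenever $Q\ne Q_u$ (they are opposite root subgroups of the $2$-transitive action on $q^2+1$ points), so all involutions of $G_\omega$ lie in $Z(Q)$, whence the characteristic subgroup of $G_\omega$ generated by its involutions equals $Z(Q)$ and $G_\omega\le N_G(Z(Q))$, a Borel subgroup of $G$. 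A short final argument inside that Borel subgroup --- using again the very restricted subgroup structure of ${}^2B_2(q)$ and the fact that its $2$-transitive action is $2$- but not $3$-transitive, hence not binary --- then pins down $G_\omega=Z(Q)$, giving (i).

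\textbf{Where the real difficulty lies.} Granting the component-group classification, the above is short; the work sits in that classification, i.e. in showing that $\Gamma(\CC)$ is connected (equivalently $\langle\CC\rangle=G$) for every non-exceptional involution class. For classical groups this is a systematic case analysis based on the Aschbacher--Seitz description of characteristic-$2$ involutions, exhibiting explicit short paths between the various involution types; the most delicate point is $\Sp_4(2^a)$, where the short root class (component group a root subgroup) must be separated from the one remaining unipotent-involution class (component group $G$) --- the order-$2a$ graph automorphism being exactly what produces this extra case, and hence conclusion (iii). For the exceptional groups of Lie type, connectivity rests on Chevalley commutator relations: for instance the long root elements of $G_2(q)$ already form a connected subgraph inside an $A_2$-type subsystem subgroup $\SL_3(q)$ and must then be bridged out of it, and a few small groups may need direct computation. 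I would expect this connectivity analysis, rather than any step in the present argument, to be the main obstacle; within the proof of the theorem itself the only place needing a genuinely new idea beyond Step 1 is the sharpening that identifies $G_\omega$ exactly in the Suzuki case.
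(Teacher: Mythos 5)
Your overall strategy (reduce to transitive, find an involution in $G_\omega$, apply the component-group classification) is the paper's, but there are two genuine gaps. First, your Step 1 is false as stated: it is not true that for an \emph{arbitrary} involution $t\in G_\omega$ a binary action forces $\Delta(t)\le G_\omega$, and the one-step claim that adjacency of $s$ and $s'$ in $\Gamma(\CC)$ ``forces $s'$ to fix $\omega$'' is not what the binarity argument yields. What it yields (Example~\ref{ex: well known}) is the existence of some $h\in G$ whose fixed-point set \emph{properly contains} $\Fix(s)$; this is only a contradiction if $s$ was chosen in a class of maximal $2$-fixity. That is exactly the hypothesis in Lemma~\ref{l: graph}, and the paper's proof accordingly takes $g$ to be an involution in $G_\omega$ of maximal $2$-fixity rather than an arbitrary one. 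Your subsequent assertion in Step 2 that ``every involution of $G_\omega$ is of exceptional type'' is therefore unjustified (and also unnecessary: one well-chosen involution suffices).

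Second, and more seriously, your summary of the component-group classification omits conclusion (iv) of Theorem~\ref{t: component}: the six classes of Table~\ref{tbl} (e.g.\ $t=(J_2^{\,3})$ in $\SL_6(2)$), for which $\Delta(t)$ is an elementary abelian $2$-group that is neither $G$ nor a root subgroup. With only your stated trichotomy, the argument would leave open additional conclusions not listed in Theorem~\ref{t: char 2}, such as $G=\SL_6(2)$ with $G_\omega$ containing a $2^9$. Ruling these out is the one place where the proof of Theorem~\ref{t: char 2} needs machinery beyond $\Delta(t)$ itself: the paper invokes the terminal component group, using Lemma~\ref{l: terminal table} ($\Delta_\infty(t)=G$ for the classes of Table~\ref{tbl}) together with Lemma~\ref{l: terminal component} ($\Delta_\infty(g)\le G_\omega$ for $g$ of maximal $2$-fixity) to derive a contradiction. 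Your proposal has no mechanism for this step. Finally, your Suzuki sharpening is only sketched at its last stage (``a short final argument inside that Borel subgroup''); the paper simply quotes the complete classification of binary actions of $^2\!B_2(q)$ from the earlier work, which is the cleaner route.
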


Note that the theorem applies with $G$ equal to $\Sp_4(2)'$, $G_2(2)'$ and $^2\!F_4(2)'$. Note too that the theorem applies to all actions of $G$ -- there is no assumption of transitivity, for instance. Let us briefly discuss the three listed items in the theorem.

In the first item, $G$ is a Suzuki group and the transitive binary actions of $G$ are completely classified \cite{ggLieBin}. In this case we know that the action of $G$ on the set of right cosets of a non-trivial proper subgroup $H$ is binary if and only $H$ is the centre of a Sylow $2$-subgroup of $G$.

For the second and third items, our expectation is that the result would remain true were the word ``contains'' to be replaced, in each case, with the word ``is''. This would amount to a classification of the non-trivial transitive binary actions of the simple groups of Lie type over a field of characteristic 2 for which a point-stabilizer has even order.

To emphasise this point we present a result at the end of the paper -- Proposition~\ref{p: pc} -- which implies that, were we to make the aforementioned replacement and restrict our attention to transitive actions, then the resulting list of actions would all be binary.

Our proof of Theorem~\ref{t: char 2} makes use of methods introduced in \cite{ggAltBin}, in particular the notion of a \emph{component group}. Our second main result pertains to a special case of this notion which we now define (more detail is given in \S\ref{s: background}).

Let $G$ be a finite group, let $t$ be an involution in $G$ and let $\mathcal C = t^G$ be the conjugacy class of $t$. We define a graph $\Gamma(\mathcal{C})$, whose vertices are elements of $\mathcal{C}$; two vertices $g,h\in \mathcal{C}$ are connected by an edge in $\Gamma(\mathcal{C})$ if $gh\in\mathcal{C}$. We write $\Delta(t)$ for the group generated by all vertices in $\Gamma(\mathcal{C})$ that lie in the connected component of the graph that contains $t$. We call $\Delta(t)$ the \emph{component group} of the element $t$.

We are ready to state our second main result. For this result we do not include $\Sp_4(2)'$ or $G_2(2)'$ in the hypothesis (but we do include $^2\!F_4(2)'$). Note also that the group $G=\SL_2(q)$ is included under item (ii) of the theorem, since $\Sp_2(q) = \SL_2(q)$. (The same comment applies to part (ii) of Theorem \ref{t: char 2}.)

\begin{theorem}\label{t: component} Let $G = G(q)$ be a simple group of Lie type over $\F_q$, where $q = 2^a$, and let $t \in G$ be an involution. Then one of the following holds:
\begin{itemize}
\item[{\rm (i)}] $\D(t) = G$;
\item[{\rm (ii)}] $t$ is a long root involution, $G = \Sp_{2n}(q)$, $\PSU_n(q)$ or $^2\!B_2(q)$, and $\D(t)$ is the centre of a long root subgroup (the centre of a Sylow $2$-subgroup when $G=\,^2\!B_2(q)$);
\item[{\rm (iii)}] $t$ is a short root involution, $G = \Sp_{4}(q)$ $(q>2)$, and $\D(t)$ is a short root subgroup;
\item[{\rm (iv)}] $G, t$ and $\D(t)$ are as in Table $\ref{tbl}$.
\end{itemize}
\end{theorem}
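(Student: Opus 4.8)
The plan is to work through the conjugacy classes of involutions of $G$ one at a time. In characteristic $2$ an involution is a unipotent element of order $2$, and the relevant classes are classified (by Aschbacher and Seitz for the classical groups, by Liebeck and Seitz for the exceptional groups); there are boundedly many in each $G$. The observation to exploit is that every vertex of $\Gamma(\mathcal{C})$ is an involution, so two distinct vertices $g,h$ are adjacent if and only if $g$ and $h$ commute and $gh\in\mathcal{C}$; in particular each edge $\{g,h\}$ lies in a triangle $\{g,h,gh\}$, with $\la g,h\ra$ a Klein four-group all of whose involutions lie in $\mathcal{C}$. The group $\D(t)$ is normalised by the setwise stabiliser in $G$ of the connected component of $t$; hence if $\Gamma(\mathcal{C})$ is connected then $\D(t)\unlhd G$, and so, $\D(t)$ being non-trivial and $G$ simple, $\D(t)=G$. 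When $\Gamma(\mathcal{C})$ is disconnected we must instead identify the subgroup generated by the component of $t$.

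For the great majority of the classes the aim is therefore to establish connectivity, giving case (i). Two ingredients drive this. First, if $t$ lies in a long root subgroup then $t$ is $\sim$-connected to every long root involution of that subgroup, since these pairwise commute and their products remain long root involutions. Second, by explicit matrix computations inside small-rank subsystem subgroups — typically $\SL_2(q)$, $\SL_3(q)$, $\Sp_4(q)$, $\SU_3(q)$ and $\SU_4(q)$ — one connects involutions lying in different root subgroups; for example, in $\SL_n(q)$ two transvections with a common axis but distinct centres multiply to a transvection, which is exactly what makes the transvection graph connected there. Running these local arguments, one shows that the component of $t$ contains enough long root involutions (over the natural module, or over a Levi subgroup) to generate $G$. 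The delicate cases are the unitary and orthogonal groups, where the behaviour of unipotent involutions is entangled with the defining form, and the exceptional groups, where the structure of involution centralisers from Liebeck--Seitz is needed.

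The exceptional outcomes (ii) and (iii) record the failure of precisely this connectivity. If $G=\Sp_{2n}(q)$ or $\PSU_n(q)$ and $t$ is a long root involution, then $t$ is a symplectic or unitary transvection $x\mapsto x+\l(x,v)v$; here the form forces the ``axis'' $v^\perp$ to be determined by the ``centre'' $\la v\ra$, so a short computation with commuting transvections shows that the only involutions $\sim$-connected to $t$ are the transvections with the same centre as $t$. Hence the component of $t$ is the set of non-identity elements of a single long root subgroup (respectively its centre, when that subgroup is non-abelian), which yields (ii); the Suzuki case $^2\!B_2(q)$ is handled the same way, using that $C_G(t)$ is a Sylow $2$-subgroup all of whose involutions are central. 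If $G=\Sp_4(q)$ with $q>2$ and $t$ is a short root involution, the analogous computation in the $4$-dimensional symplectic space — exploiting that $e_1-e_2$ and $e_1+e_2$ sum to a long root, so the corresponding root subgroups fail to commute — confines the component of $t$ to a single short root subgroup, giving (iii); the hypothesis $q>2$ is needed since $\Sp_4(2)$ is not simple. By contrast, for $n\ge 3$ the short root subgroups of $\Sp_{2n}(q)$ have room to interact (for instance $x_{e_1-e_2}(c)\,x_{e_1-e_3}(d)$ is again a short root element), so connectivity is restored and $\D(t)=G$.

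Finally, a short list of small groups — such as $^2\!F_4(2)'$, together with various $G(q)$ of small rank and small $q$ that get flagged when the generic arguments break down — lie outside the scope of the uniform reasoning; for these we compute $\Gamma(\mathcal{C})$ and read off $\D(t)$ directly by machine, recording the results in Table~\ref{tbl}. I expect the main obstacle to be the second ingredient of the generic argument: one needs a clean, uniform account of the ways in which two commuting involutions of $\mathcal{C}$ can multiply back into $\mathcal{C}$, and one must verify that the borderline classes — short root involutions of $\Sp_{2n}(q)$ for $n\ge 3$, long root involutions of $\PSL_n(q)$ and of the orthogonal groups, and the small Jordan-type classes of $\Sp_4(q)$ other than the short root one — genuinely satisfy $\D(t)=G$ and do not spawn further exceptions.
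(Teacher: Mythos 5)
Your overall architecture --- generic arguments forcing $\D(t)=G$, an explicit commuting-transvection computation isolating the long root classes of $\Sp_{2n}(q)$, $\PSU_n(q)$ and $^2\!B_2(q)$ together with the short root class of $\Sp_4(q)$, and machine computation for a residue of small cases --- matches the paper's, and your treatment of cases (ii) and (iii) is essentially the paper's proof of its Lemma~\ref{rooty}: the observation that the symplectic or unitary form ties the axis of a transvection to its centre, so that two commuting transvections with distinct centres multiply to an element of rank $2$ and hence leave the class, is exactly the computation the paper performs inside $C_G(t)$. The remark that connectivity of $\G(\CC)$ forces $\D(t)=G$ by normality is also correct and is how the paper's computations conclude in most cases.

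The genuine gap is in case (i) for the classes that are \emph{not} root elements, which is where most of the work, and every entry of Table~\ref{tbl}, lives. Both of your ``ingredients'' concern root involutions; but if $t$ has Jordan type $(J_2^a,J_1^b)$ with $a\ge 2$ (or type $W(1)^a+W(2)^b+V(2)^c$ with $b+c\ge 2$ in the symplectic/orthogonal case), then \emph{no} vertex of $\G(\CC)$ is a root involution, so connecting root involutions within and across root subgroups says nothing about the component of $t$; and the claim that ``the component of $t$ contains enough long root involutions to generate $G$'' cannot be literally true, since the component consists of elements of $\CC$ only. What is missing is the paper's inductive device: write $t=t_1\oplus t_2$ along the Jordan (or orthogonal) decomposition and pair a neighbour $u_1$ of $t_1$ in the smaller group with a companion $u_2$ chosen so that $t_2u_2$ remains in the class of $t_2$ --- typically built from blocks $J_2(\a)$ with $\a\ne 0,1$ --- so that $u=u_1\oplus u_2$ is adjacent to $t$ and $\D(t)\ge\D(t_1)$. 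This is precisely what fails when $q=2$ (no such $\a$ exists), which is why all exceptions occur at $q=2$ and why the base cases $\SL_6(2)$, $\PSU_n(2)$ ($n\le 8$), $\Sp_{2n}(2)$ ($n\le 6$), $\O^{\pm}_{2n}(2)$ ($n\le 6$) must be settled by machine before the induction can start. Without this step your plan has no mechanism for the non-root classes and no way to bound in advance the finite list to hand to the computer: note for instance that $(J_2^{\,3})\in\SL_6(2)$ is exceptional while $(J_2^{\,3},J_1)\in\SL_7(2)$ is not, a distinction that only emerges from the induction together with the computed base cases. The exceptional groups of Lie type then reduce to the classical ones via subsystem subgroups and the $C_G(t)$-invariance of $\D(t)$, which your subsystem-subgroup remark gestures at but which again requires the classical non-root cases as input.
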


\begin{table}[h!]
\label{tbl}
 \caption{Exceptional involution classes}
\begin{tabular}{|c|c|c|}
\hline
$G$ & $t$ & $\D(t)$ \\
\hline &&\\[-1em]
$\SL_6(2)$ & $(J_2^{\,3})$ & $2^9$ \\
$\PSU_6(2)$ & $(J_2^{\,3})$ & $2^9$ \\
$\PSU_7(2)$ & $(J_2^{\,3},J_1)$ & $2^9$ \\
$\Sp_6(2)$ & $W(2)+V(2)$ & $2^6$ \\
$\O_{12}^+(2)$ & $W(2)^3$ & $2^{15}$ \\
$\Sp_{12}(2)$ & $W(2)^3$ & $2^{15}$\\
\hline
\end{tabular}
\end{table}

The notation for the classes of $\Sp_6(2)$ and $\O_{12}^+(2)$ in Table \ref{tbl} is described in  \S \ref{orth}. Each line of the table corresponds to a single conjugacy class in $G$, except for the line for $G = \O_{12}^+(2)$, where $W(2)^3$ corresponds to two $G$-classes, interchanged by an outer automorphism. We remark that for the classes in Table \ref{tbl}, $\D(t)$ is elementary abelian, and in all cases except $G = \PSU_7(2)$, we have $\D(t) = O_2(C_G(t))$.

\subsection*{Structure of the paper}

In \S\ref{s: background} we discuss the graph $\Gamma(\mathcal{C})$ in a more general context than that given above and we state a number of results that connect this graph to the binary actions.

In \S\ref{s: computing} we describe some computational methods that pertain to the graph $\Gamma(\mathcal{C})$; this section concludes with several lemmas that will be important for the proofs of Theorems~\ref{t: char 2} and \ref{t: component}. In \S\ref{s: proof component} we give the proof of Theorems~\ref{t: component} and in \S\ref{s: proof char 2} we give the (very short) proof of Theorem~\ref{t: char 2}. In the final section we prove a proposition which implies a partial converse to Theorem~\ref{t: char 2}.

\section{Binary actions and component groups}\label{s: background}

All groups mentioned in this paper are finite, and all group actions are on finite sets.

\subsection{Component groups} \label{subsec-component-groups}

We mentioned (a special case of) the graph $\Gamma(\mathcal{C})$ in the introduction. This graph was first defined in \cite{ggAltBin} and we give the definition here.

\begin{defn}\label{d: graph}
Given a conjugacy class $\mathcal{C}$ in a group $G$ we define a graph, $\Gamma(\mathcal{C})$, whose vertices are elements of $\mathcal{C}$; two vertices $g,h\in \mathcal{C}$ are connected in $\Gamma(\mathcal{C})$ if $g$ and $h$ commute and either $gh^{-1}$ or $hg^{-1}$ is in $\mathcal{C}$.
 \end{defn}
 
The next lemma connects the graph $\Gamma(\mathcal{C})$ to the notion of a binary action; this lemma first appeared as \cite[Corollary~2.16]{ggAltBin}. The \emph{fixity} of an element in group acting on a set $\Omega$ is the number of points of $\Omega$ fixed by the element.
 
\begin{lem}\label{l: graph}
Let $G$ act transitively on $\Omega$, let $H$ be the stabilizer of a point in $\Omega$, let $p$ be a prime dividing $|H|$, let $\CC$ be a conjugacy class of elements of order $p$ of maximal fixity and let $g$ be in $H\cap \mathcal{C}$. If the action is binary, then $H$ contains all vertices in the connnected component of $\Gamma(\CC)$ that contains $g$.
\end{lem}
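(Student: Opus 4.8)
The plan is to exploit the defining property of a binary action by building, from a path in the graph $\Gamma(\CC)$, a pair of $n$-tuples that are $2$-related but fail to be $n$-related unless $H$ already contains every vertex on that path. First I would set up notation: suppose for contradiction that some vertex $c$ in the connected component of $g$ lies outside $H$, and choose a path $g = c_0, c_1, \dots, c_m = c$ in $\Gamma(\CC)$ with $m$ minimal, so that $c_0, \dots, c_{m-1} \in H$ but $c_m \notin H$. The point $\omega$ fixed by $H$, together with the points $\omega^{c_i}$, will form the entries of the tuples. The heart of the argument is the standard dictionary between $2$-relatedness of tuples of points and the existence of elements of $G$ realising prescribed pairs of point-maps; since each $c_i$ for $i<m$ fixes $\omega$, partial consistency conditions among the $\omega^{c_i}$ are automatic, and the edge relation $c_i c_{i+1} \in \CC$ (together with commuting) is exactly what is needed to match up the remaining overlaps.

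The key steps, in order, are as follows. First, translate the problem into counting fixed points: because $\CC$ has maximal fixity among order-$p$ classes and $g \in H \cap \CC$, the element $g$ (hence every conjugate of $g$ lying in a point stabiliser) fixes the maximum possible number of points, and in particular $\Fix(g) \ge \Fix(c_m)$ in a way that lets us compare the cardinalities $|\Fix(c_i) \cap \Fix(c_{i+1})|$ along the path. Second, construct two $n$-tuples $I$ and $J$: roughly, $I$ records the action of a suitable product of the $c_i$ on a tuple of fixed points, while $J$ records the ``same'' configuration but with $\omega$ displaced so that realising all $n$ coordinates simultaneously forces an element of $G$ carrying $\omega$ to $\omega^{c_m}$, i.e.\ forces $c_m \in H$ up to $H$-translation. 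Third, verify $I \stb{2} J$: for any two coordinates, either both come from indices $<m$ (where $c_i \in H$ does the job), or one involves the index $m$, in which case the single edge $c_{m-1}c_m \in \CC$ together with the commuting relation provides the required $g \in G$. Fourth, invoke binarity to get $I \stb{n} J$, producing a single $g \in G$ that simultaneously matches all coordinates; unwinding the construction, this $g$ normalises the relevant configuration and shows $c_m$ (or a conjugate differing by an element of $H$) already lies in $H$, contradicting $c_m \notin H$ and the minimality of $m$.

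The main obstacle, I expect, is the bookkeeping in the second and third steps: arranging the tuples $I, J$ so that \emph{pairwise} agreement is cheap (using only the $c_i \in H$ and the single edge at the far end) while \emph{global} agreement is expensive (forcing $c_m \in H$). This is precisely where the edge condition ``$g,h$ commute and $gh^{-1} \in \CC$'' must be used in full strength: commuting guarantees that the two relevant point-maps can be composed without conflict, and $gh^{-1} \in \CC$ (a class of maximal fixity) guarantees enough common fixed points to pad the tuples out to length $n$. A secondary subtlety is handling the inductive descent along the path cleanly — one wants a single application of binarity per edge, or better, a single application to the whole path, so the tuples should encode the entire sequence $c_0, \dots, c_m$ at once rather than one edge at a time. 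Once the tuples are correctly specified, the verification of $2$-relatedness is routine case analysis, and the conclusion is immediate from Definition of a binary action.
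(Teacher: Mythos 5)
Your reduction to a single ``boundary edge'' is fine and matches the paper: if the component of $g$ is not contained in $H$, connectivity gives adjacent vertices $g_1\in H$, $g_2\notin H$, and since $g_1$ fixes $\omega$ while $g_2$ does not, $\Fix(g_1)\neq\Fix(g_2)$. But from that point on your argument goes wrong. The contradiction you aim for --- that binarity ``forces $c_m\in H$ up to $H$-translation'' --- is not the mechanism that makes this lemma true, and I do not see how your tuples could deliver it: the points $\omega^{c_i}$ for $i<m$ all equal $\omega$, so the configuration $(\omega,\omega^{c_0},\dots,\omega^{c_m})$ contains only two distinct points and encodes essentially nothing; and even if an $n$-relating element $g'$ with $\omega^{g'}=\omega^{c_m}$ were produced, that only places $g'c_m^{-1}$ in the stabilizer of $\omega^{c_m}$, not $c_m$ in $H$. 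Tellingly, your plan makes no essential use of the maximal-fixity hypothesis (you invoke it only to ``pad the tuples''), whereas that hypothesis is exactly where the contradiction must come from.

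The actual argument (Example 2.4 and Lemma 2.6 of the paper) runs as follows. Set $g_3=g_1g_2^{-1}$; since $g_1,g_2,g_3$ all lie in $\CC$ they have fixed sets $F_1,F_2,F_3$ of the same cardinality, with $F_1\neq F_2$. Take $I$ to be an enumeration of $F=F_1\cup F_2\cup F_3$ and $J=I^\tau$, where $\tau$ acts as $g_1$ on $F_3$ and trivially on $F\setminus F_3$. Then $I\stb{2}J$ is witnessed pairwise by $1$, $g_1$ or $g_2$ (the edge relation $g_1g_2^{-1}\in\CC$ guarantees $g_1$ and $g_2$ agree on $F_3$), but any single element of $G$ realising $\tau$ on all of $F$ must fix $F_1\cup F_2$ pointwise while moving a point of $F_3$, hence has strictly larger fixed set than $g_1$. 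Binarity produces such an element, contradicting the maximality of the fixity of $\CC$ --- not the assumption $g_2\notin H$. Your proposal is missing this central idea, so as written it has a genuine gap rather than being an alternative route.
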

 
In what follows we will write ``element of maximal $p$-fixity'' as shorthand for ``element of prime order $p$ of maximal fixity''. 
 
In the notation of Lemma~\ref{l: graph}, we say that the \emph{component group} of $g$ in $G$ is the group generated by the connected component of $\Gamma(\CC)$ that contains $g$; we write this as $\Delta(g)$. So the conclusion of Lemma~\ref{l: graph} could be ``$H$ contains $\Delta(g)$''.

It turns out that Definition~\ref{d: graph} and Lemma~\ref{l: graph} can be stated slightly more generally. To do this we need to revisit an example from \cite{ggAltBin}, on which the proof of Lemma~\ref{l: graph} is based. Here and below, we write $Fix(g)$ to denote the fixed set of a permutation $g$.

\begin{ex}\label{ex: well known}
Let $p$ be a prime and let $g_1, g_2$ be distinct commuting elements of order $p$ in a group $G$ acting on a set $\Lambda$. Set $g_3=g_1g_2^{-1}$ and write $F_i$ for the fixed set of $g_i$ for $i=1,2,3$. Assume that
\[
 |F_1|=|F_2|=|F_3|\geq 1
\]
and assume that $F_1$ and $F_2$ are distinct (which, in turn, means that $F_3$ is distinct from both $F_1$ and $F_2$). 

Then $\langle g_1, g_2\rangle$ acts on the set $F=F_1\cup F_2\cup F_3$ and we write $\tau_0$ for the (non-trivial) permutation of $F_3$ induced by $g_1$ (and $g_2$) on the set $F_3$. Finally let $\tau$ be the permutation of $F$ which equals $\tau_0$ on $F_3$ and fixes all points in $F\setminus F_3$. 

Write $I=(f_1,\dots, f_k)$, where $F=\{f_1,\dots, f_k\}$ and $k=|F|$, and write $J=(f_1^\tau,\dots, f_k^\tau)$. It is easy to verify directly that
\begin{enumerate}
 \item $I\stb{2} J$;
 \item $I\stb{k} J$ if and only there exists a permutation $h\in G$ that fixes $F$ setwise and that induces the permutation $\tau$ on $F$.
\end{enumerate}
In particular, if the action of $G$ on $\Lambda$ is binary, then there exists a permutation $h\in G$ that fixes $F$ setwise and that induces the permutation $\tau$ on $F$. Note that $Fix(h)$ properly contains $Fix(g_i)$ for $i=1,2$.
\end{ex}

\begin{defn}\label{d: graph2}
 Given a union of conjugacy classes $\mathcal{D}$ in a group $G$ we define a graph, $\Gamma(\mathcal{D})$, whose vertices are elements of $\mathcal{D}$; two vertices $g,h\in \mathcal{D}$ are connected in $\Gamma(\mathcal{D})$ if $g$ and $h$ commute and either $gh^{-1}$ or $hg^{-1}$ is in $\mathcal{D}$.
 \end{defn}

For $g\in \mathcal{D}$, we write $\Delta(g,\mathcal{D})$ for the group generated by the vertices in the connnected component of $\Gamma(\mathcal{D})$ that contains $g$. In particular, if $\mathcal{C}$ is the conjugacy class containing $g$, then $\Delta(g,\CC)=\Delta(g)$, the component group of $g$.

 \begin{lem}\label{l: graph2}
Let $G$ act transitively on $\Omega$, let $H$ be the stabilizer of a point in $\Omega$, let $p$ be a prime dividing $|H|$, let $\mathcal{D}$ be a union of conjugacy classes of elements of maximal $p$-fixity and let $g$ be in $H\cap \mathcal{D}$. If the action is binary, then $\Delta(g,\mathcal{D})\leq H$.
\end{lem}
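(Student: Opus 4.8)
The plan is to show directly that every vertex of $\Gamma(\mathcal{D})$ lying in the connected component of $g$ belongs to $H$; since $\Delta(g,\mathcal{D})$ is by definition generated by those vertices, this gives $\Delta(g,\mathcal{D})\le H$. Let $\omega\in\Omega$ be the point with $G_\omega=H$. I would argue by induction along paths in the graph: the component is connected, $g\in H$ by hypothesis, so it suffices to prove that whenever $g'$ is a vertex of the component already known to lie in $H$ and $g''$ is joined to $g'$ by an edge of $\Gamma(\mathcal{D})$, then $g''\in H$ as well.

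For the inductive step, by Definition~\ref{d: graph2} the vertices $g',g''$ commute and one of $g'(g'')^{-1}$, $g''(g')^{-1}$ lies in $\mathcal{D}$; call it $g_3$, and order the pair so that $\{g_1,g_2\}=\{g',g''\}$ with $g_3=g_1g_2^{-1}$. Then $g_1,g_2,g_3$ are pairwise commuting elements of order $p$, all lying in $\mathcal{D}$. Here is the one place where the hypothesis is used in its full generality: because $\mathcal{D}$ is a union of conjugacy classes of elements of maximal $p$-fixity, and "maximal $p$-fixity" refers to a single numerical invariant $f=\max\{\,|\Fix(x)| : x\in G,\ |x|=p\,\}$, all three of $g_1,g_2,g_3$ have fixed sets of size exactly $f$, regardless of which constituent class each lies in. Also $\omega\in\Fix(g')$ since $g'\in H$, so $f\ge 1$. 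Now if $\omega\in\Fix(g'')$ we are done. Otherwise $\omega\in\Fix(g')\setminus\Fix(g'')$, so $\Fix(g')\ne\Fix(g'')$, and the triple $(g_1,g_2,g_3)$ satisfies all the hypotheses of Example~\ref{ex: well known}.

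Applying that example and the hypothesis that the action is binary, we obtain $h\in G$ fixing $F:=\Fix(g_1)\cup\Fix(g_2)\cup\Fix(g_3)$ setwise, inducing on $F$ the permutation $\tau$ of Example~\ref{ex: well known}, and with $\Fix(h)$ properly containing $\Fix(g')$. Since $\tau$ has order $p$ (it is the nontrivial permutation $\tau_0$ induced by $g_1$ on $\Fix(g_3)$, extended by the identity), $h$ acts on $F$, hence on $\Omega$, with order divisible by $p$; choosing an appropriate $p$-power $h_0$ of $h$ gives an element of order exactly $p$ with $\Fix(h_0)\supseteq\Fix(h)\supsetneq\Fix(g')$, so $|\Fix(h_0)|\ge f+1$, contradicting the maximality of $f$. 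Hence the case $\omega\notin\Fix(g'')$ cannot occur, so $g''\in H$, the induction closes, and $\Delta(g,\mathcal{D})\le H$.

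\textbf{Main obstacle.} There is no serious difficulty: the argument is a direct transcription of the proof of Lemma~\ref{l: graph}, and the content of the generalisation is simply the observation that that proof only ever used that $g_1,g_2,g_3$ all have the (common, global) maximal $p$-fixity, which remains true when $\mathcal{C}$ is replaced by a union $\mathcal{D}$ of classes all of maximal $p$-fixity. The one technical point requiring care is that the element $h$ produced by the binary hypothesis need not itself have order $p$; this is dealt with by passing to the $p$-power $h_0$, using $\Fix(h)\subseteq\Fix(h_0)$ so that the fixity bound is not lost.
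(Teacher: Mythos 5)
Your proof is correct and follows essentially the same route as the paper's: by connectivity one reduces to a single edge of $\Gamma(\mathcal{D})$ with one endpoint in $H$ and one outside, and the binary hypothesis together with Example~\ref{ex: well known} then produces an element whose fixed set properly contains that of the endpoint in $H$, contradicting maximal $p$-fixity. The only difference is your explicit passage to the $p$-power $h_0$ of $h$ to ensure the contradicting element has order $p$ --- a point the paper leaves implicit --- which is a sensible refinement but not a different argument.
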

\begin{proof}
Assume, for a contradiction, that $\Delta(g,\mathcal{D})\not\leq H$. Then there exist $g_1, g_2\in \mathcal{D}$ such that $g_1$ and $g_2$ are adjacent in $\Gamma(\mathcal{D})$, $g_1\in H$ and $g_2\not\in H$. This immediately implies that $g_1$ and $g_2$ commute and that $Fix(g_1)$ is distinct from $Fix(g_2)$. Now the set-up of Example~\ref{ex: well known} applies. Since the action of $G$ on $\Omega$ is assumed to be binary, there must exist an element $h$ whose fixed set properly contains $Fix(g_1)$. This contradicts the fact that $g_1$ is an element of maximal $p$-fixity and we are done.
\end{proof}

\subsection{Terminal component groups}

The next lemma inspires the definitions that follow.

\begin{lem}\label{l: fix comp}
Let $G$ act transitively on $\Omega$, let $H$ be the stabilizer of a point in $\Omega$, let $p$ be a prime dividing $|H|$, let $\mathcal{D}$ be a union of conjugacy classes of elements of maximal $p$-fixity and let $g$ be in $H\cap \mathcal{D}$. If the action is binary, then every element of $\Delta(g,\mathcal{D})$ that has order $p$ is also an element of maximal $p$-fixity.
\end{lem}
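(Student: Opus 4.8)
The plan is to reduce the lemma to the following claim: \emph{every vertex lying in the connected component of $\Gamma(\mathcal{D})$ that contains $g$ has the same set of fixed points}. Write $\Theta$ for that component and $\Delta=\Delta(g,\mathcal{D})$ for the subgroup generated by its vertices, and let $m$ denote the maximal $p$-fixity. Since $p$ divides $|H|$ there is an element of order $p$ in $H$, and it fixes the point stabilised by $H$, so $m\geq 1$; moreover every vertex of $\Gamma(\mathcal{D})$ belongs to $\mathcal{D}$ and hence has fixity exactly $m$. Granting the claim, all vertices of $\Theta$ share a common fixed set $F$ with $|F|=m$, so $\Delta$ fixes every point of $F$; hence each $y\in\Delta$ satisfies $F\subseteq\Fix(y)$, so that $|\Fix(y)|\geq m$, and if in addition $y$ has order $p$ then $|\Fix(y)|\leq m$ by maximality, giving $|\Fix(y)|=m$. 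This is exactly what the lemma asserts.

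To prove the claim I would first show that adjacent vertices of $\Theta$ have equal fixed sets, and then propagate this through the connected graph $\Theta$. So let $v,w$ be adjacent vertices; by Definition~\ref{d: graph2}, after interchanging $v$ and $w$ if necessary, we may assume $vw^{-1}\in\mathcal{D}$. Suppose for a contradiction that $\Fix(v)\neq\Fix(w)$. Then $g_1:=v$, $g_2:=w$ and $g_3:=vw^{-1}$ are distinct commuting elements of order $p$, all lying in $\mathcal{D}$, so $|\Fix(g_1)|=|\Fix(g_2)|=|\Fix(g_3)|=m\geq 1$ while $\Fix(g_1)\neq\Fix(g_2)$: this is precisely the set-up of Example~\ref{ex: well known}. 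As the action is binary, that example produces $h\in G$ with $\Fix(h)\supsetneq\Fix(g_1)$, so $|\Fix(h)|>m$. By its construction $h$ acts as a non-trivial permutation of order $p$ on $\Fix(g_3)$, so $p$ divides the order of $h$; a suitable power of $h$ is then an element of order $p$ whose fixed set contains $\Fix(h)$ and hence has more than $m$ points, contradicting the maximality of $m$. Therefore adjacent vertices of $\Theta$ have the same fixed set, and since $\Theta$ is connected the claim follows.

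The step I would expect to need the most thought is recognising that binariness, through Example~\ref{ex: well known}, forces adjacent vertices of $\Gamma(\mathcal{D})$ to have identical fixed sets; once that is in place the rest is mechanical, the only wrinkle being that the element $h$ supplied by Example~\ref{ex: well known} need not itself have order $p$, so one must pass to a $p$-power of it before invoking the maximality of $m$. I note finally that this argument does not use the conclusion $\Delta(g,\mathcal{D})\leq H$ of Lemma~\ref{l: graph2}; it in fact recovers it, since the common fixed set $F$ contains the point stabilised by $H$.
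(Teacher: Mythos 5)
Your proof is correct and rests on the same mechanism as the paper's: the paper simply applies Lemma~\ref{l: graph2} to the stabilizer of each point fixed by $g$, concluding that every element of $\Delta(g,\mathcal{D})$ fixes all of $\Fix(g)$ and hence has fixity at least the maximum, whereas you inline that lemma's proof via Example~\ref{ex: well known} to show directly that adjacent vertices of $\Gamma(\mathcal{D})$ have identical fixed sets. Your explicit passage to a $p$-power of the element $h$ before invoking maximality of the $p$-fixity is a worthwhile precision that the paper's own write-up leaves implicit.
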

\begin{proof}
By assumption, the action of $G$ is binary and so Lemma~\ref{l: graph2} implies that the stabilizer of any point fixed by $g$ contains $\Delta(g,\mathcal{D})$. Thus every element of $\Delta(g,\mathcal{D})$ fixes at least as many elements of $\Omega$ as $g$.
\end{proof}

Now consider a finite group $G$ and an element $g$ contained in $\DD$, a union of conjugacy classes of elements of order $p$. Let $\DD_1=\DD$ and let $\Gamma_1(\DD)=\Gamma(\DD)$ and $\Delta_1(g)=\Delta(g,\DD)$.

For a positive integer $i$, define $\DD_i$ to be the union of conjugacy classes in $G$ which satisfy two criteria:
\begin{enumerate}
 \item elements of these conjugacy classes have order $p$;
 \item for any such conjugacy class, $C$, we have $C\cap \Delta(g, \DD_{i-1})\neq\emptyset$
\end{enumerate}

Now define $\Delta_i(g,\DD)=\Delta(g,\DD_{i})$. We have an ascending chain of subgroups:
\[
 \Delta_1(g,\DD) \leq \Delta_2(g,\DD) \leq \Delta_3(g,\DD)\leq\cdots
\]
We define $\Delta_\infty(g,\DD)$ to be the union of all of the subgroups in this chain. In the case where $\DD=\CC$, the conjugacy class containing $g$, we write $\Delta_\infty(g)=\Delta_\infty(g,\DD)$ and call this group the \emph{terminal component group of $g$ in $G$}. This definition allows us to strengthen Lemma~\ref{l: fix comp}:

\begin{lem}\label{l: terminal component}
Let $G$ act transitively on $\Omega$, let $H$ be the stabilizer of a point in $\Omega$, let $p$ be a prime dividing $|H|$, let $\mathcal{D}$ be a union of conjugacy classes of elements of maximal $p$-fixity and let $g$ be in $H\cap \mathcal{D}$. If the action is binary, then $H$ contains $\Delta_\infty(g,\DD)$.

In particular if $g$ is any element of maximal $p$-fixity, then $H$ contains $\Delta_\infty(g)$, the terminal component group of $g$.

\end{lem}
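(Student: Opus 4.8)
The plan is an induction on $i$ which reapplies Lemmas~\ref{l: graph2} and \ref{l: fix comp} at every stage. Concretely, I would prove that for each $i\geq 1$ the following three statements hold simultaneously: (a) $\DD_i$ is a union of conjugacy classes of elements of maximal $p$-fixity; (b) $g\in H\cap\DD_i$; and (c) $\Delta(g,\DD_i)\leq H$. Granting these for all $i$, the terminal component group $\Delta_\infty(g,\DD)=\bigcup_{i\geq 1}\Delta(g,\DD_i)$ is the union of an ascending chain of subgroups all lying inside $H$, hence is itself contained in $H$; that is the first assertion.

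The base case $i=1$ is immediate: $\DD_1=\DD$, so (a) and (b) are hypotheses of the lemma, and (c) is exactly Lemma~\ref{l: graph2}. For the inductive step, assume (a), (b), (c) hold with $i$ replaced by $i-1$. To get (a) for $i$, take any conjugacy class $C\subseteq\DD_i$; its elements have order $p$, and by the definition of $\DD_i$ the class $C$ meets $\Delta(g,\DD_{i-1})$, say in an element $x$. Since (a) and (b) hold for $i-1$, Lemma~\ref{l: fix comp} may be invoked with $\DD_{i-1}$ in the role of $\DD$, and it shows that $x$ --- an element of order $p$ in $\Delta(g,\DD_{i-1})$ --- has maximal $p$-fixity; hence so does every element of $C$, which is (a) for $i$. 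For (b), observe that by (b) for $i-1$ the element $g$ is a vertex of $\Gamma(\DD_{i-1})$ lying in its own connected component, so $g\in\Delta(g,\DD_{i-1})$; thus the conjugacy class of $g$ meets $\Delta(g,\DD_{i-1})$ and consists of elements of order $p$, whence $g\in\DD_i$, and $g\in H$ by hypothesis. Finally, (a) and (b) for $i$ having been established, Lemma~\ref{l: graph2} applied with $\DD_i$ in the role of $\DD$ yields $\Delta(g,\DD_i)\leq H$, which is (c) for $i$. This completes the induction.

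For the ``in particular'' clause, apply the first part with $\DD=\CC=g^G$: this is a single conjugacy class, consisting of elements of maximal $p$-fixity by the hypothesis on $g$. Since $p\mid|H|$ the maximal $p$-fixity is at least $1$, so $g$ fixes some point of $\Omega$ and therefore lies in the corresponding point stabilizer; applying the first part to that stabilizer gives $\Delta_\infty(g)=\Delta_\infty(g,\CC)\leq H$.

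I do not anticipate a real obstacle: the proof is bookkeeping resting on the two preceding lemmas. The only delicate point is choosing the induction hypothesis correctly --- it must carry along the invariant that each $\DD_i$ is again a union of classes of maximal $p$-fixity, for otherwise neither Lemma~\ref{l: fix comp} nor Lemma~\ref{l: graph2} would be applicable at the next step; securing that invariant is precisely what Lemma~\ref{l: fix comp} does.
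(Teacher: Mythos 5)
Your proof is correct and follows essentially the same route as the paper: an induction that alternates Lemma~\ref{l: graph2} and Lemma~\ref{l: fix comp} to carry along the invariant that each $\DD_i$ consists of classes of maximal $p$-fixity. You are in fact slightly more careful than the published argument in explicitly verifying that $g\in\DD_i$ at each stage (needed to invoke Lemma~\ref{l: graph2} with $\DD_i$), a point the paper leaves implicit.
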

\begin{proof}
We suppose that $\DD_i$ is a union of conjugacy classes of elements of maximal $p$-fixity. We will use induction to prove that, for all positive integers $i$,
\begin{enumerate}
 \item[(S1)] $\Delta_i(g,\DD)\leq H$; and
 \item[(S2)] $\DD_{i+1}$ is a union of conjugacy classes of elements of maximal $p$-fixity.
\end{enumerate}

For $i=1$, (S1) follows from Lemma~\ref{l: graph2} and (S2) follows from Lemma~\ref{l: fix comp}. Thus the base case of our inductive argument is proved.

Let us assume that (S1) and (S2) are true for $i$. Since (S2) is true for $i$, Lemma~\ref{l: graph2} implies that (S1) is true for $i+1$. Now Lemma~\ref{l: fix comp} implies that (S2) is true for $i+1$. The result follows by induction.
\end{proof}

\begin{rem}
In this paper we will interested primarily in the groups $\Delta(g)$ and $\Delta_\infty(g)$ for various elements $g$ of prime order in $G$.

The most obvious scenario where the more general notion of $\Delta(g,\DD)$ and $\Delta_\infty(g,\DD)$ is of interest would be setting $\mathcal{D}$ to be the \emph{rational conjugacy class} containing $g$. It is clear that in any action of $G$, if $g$ is an element of maximal $p$-fixity, then the same will be true of all of the rational conjugates of $g$.
\end{rem}

\section{Computing with component groups}\label{s: computing}

The proof of Theorem \ref{t: component} involves the calculation of several component groups in certain exceptional cases, using a computer. We have relied on GAP and Sagemath, but it is not possible to use a naive approach for the task at hand. In this section, we describe how the computation was made possible. Besides, the ``transport group", to be defined shortly, is probably a useful object to think of, even in theoretical situations.

In what follows, $G$ is a finite group, $s \in G$ is an involution and $\CC$ is its conjugacy class. We wish to compute the connected component $X$ of $\Gamma(\CC)$ containing $s$; the subgroup generated by this connected component is then determined readily by standard algorithms. 

Note that the techniques we describe would work equally well with $s$ an element of prime order $p$, although we stick to $p=2$ for simplicity of language (and because this is what the paper is about).

\subsection{Introducing the transport group}

The naive approach to our problem would consist in finding all the neighbours of $s$ in the graph $\Gamma(\CC)$, by scanning all the elements of $\CC$ one by one ; then finding the neighbours of these neighbours ; and then iterating until no new vertex can be appended to the connected component. This is way too slow in practice. 

We will mostly reduce things to the computation of the original neighbours of $s$, with no need to dive deeper in the graph. An essential ingredient is the following lemma :

\begin{lem}
Let $Y$ be a finite, connected graph, and let $T$ be a subgroup of $Aut(Y)$ with the following property: there is a vertex $v$ such that all its neighbours are of the form $v^t$ for $t \in T$. Then the action of $T$ is vertex-transitive.
\end{lem}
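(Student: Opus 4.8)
The plan is to prove the stronger-looking but equivalent statement that $v^T = V(Y)$; this is precisely the assertion that $T$ acts transitively on the vertex set. Write $O := v^T$ for the $T$-orbit of $v$. Since $Y$ is connected, it suffices to check that $O$ is closed under adjacency: if $w \in O$ and $u$ is a neighbour of $w$, then $u \in O$. Granting this, any vertex $u$ can be joined to $v$ by a path $v = o_0, o_1, \dots, o_k = u$, and an immediate induction along the path (using $o_0 = v \in O$, and the fact that a neighbour of an element of $O$ lies in $O$) shows $o_i \in O$ for all $i$, whence $u \in O$; so $O = V(Y)$.

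The one real idea is the verification of this closure property, which I would carry out by a transport argument. Let $w \in O$ and pick $s \in T$ with $w = v^s$. If $u$ is a neighbour of $w$, then applying $s^{-1} \in T \le \mathrm{Aut}(Y)$, which preserves adjacency, shows that $u^{s^{-1}}$ is a neighbour of $w^{s^{-1}} = v$. By the hypothesis on $v$, there is $t \in T$ with $u^{s^{-1}} = v^t$. Applying $s$ gives $u = (v^t)^s = v^{ts}$, and since $ts \in T$ we conclude $u \in v^T = O$, as required.

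The main point to be careful about $\,$--- and it is essentially the only content $\,$--- is that the hypothesis constrains only the neighbours of the single distinguished vertex $v$, so one cannot argue locally at an arbitrary $w \in O$; instead one conjugates a neighbour of $w$ back to a neighbour of $v$, applies the hypothesis there, and conjugates forward again. Connectedness of $Y$ is then exactly what propagates this local statement at $v$ to the whole graph. (Finiteness of $Y$, although assumed in the lemma, plays no role in the argument.)
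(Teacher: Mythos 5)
Your proof is correct and follows essentially the same route as the paper's: the key transport step (pull a neighbour of $w = v^s$ back to a neighbour of $v$ via $s^{-1}$, apply the hypothesis at $v$, and push forward again) is identical, and your ``orbit is closed under adjacency plus connectedness'' framing is just a repackaging of the paper's induction on the distance from $v$. No issues.
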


\begin{proof}
Let $w \in Y$. To show that $w \in v^T$, we proceed by induction on the length $n$ of a minimal path from $v$ to $w$ in the graph, the case $n=1$ being given. We see immediately that $w$ has a neighbour $u$ such that $u \in v^T$, by induction. If $u = v^t$ for $t \in T$, then $w^{t^{-1}}$ is a neighbour of $v$, so that $w^{t^{-1}} = v^{t'}$ for some $t' \in T$, and $w = v^{t't}$ as desired.
\end{proof}

We will apply this with $Y=X$, the connected component of $s$, viewed as an induced subgraph of $\Gamma(\CC)$, and of course with $v=s$. By definition, all the vertices of $\Gamma(\CC)$ are conjugates of $s$ within $G$, so that we may find elements $t_i \in G$ such that the neighbours of $s$ can be computed to be of the form $s^{t_1}, \ldots, s^{t_d}$. Each $t_i$ acts on $\Gamma(\CC)$ by graph automorphisms, and takes $s$ to an element of $X$, and so it preserves the connected subgraph~$X$. We can then apply the lemma to the subgroup $T= \langle t_1, \ldots, t_d \rangle$, and we find that $X= s^T$.

This provides already a comfortable speedup, but we also need to select the group $T$ more carefully, as finding the neighbours of $v$ is in itself a costly operation. Since these neighbours must commute with $s$ by definition, we elect to: \begin{enumerate}
\item compute first the centralizer $Z = C_G(s)$ ;
\item determine the conjugacy classes of involutions in $Z$ ;
\item for each such class, pick one representative $x$, and check whether there is $t \in G$ such that $s^t = x$ (so that $x \in \CC$), and then check whether $sx \in \CC$; if not, discard the conjugacy class;
\item finally, define the {\em transport group} $T$ to be generated by $Z$ and all the elements $t$ obtained during the previous step.
\end{enumerate}

The lemma then applies with this $T$, as is checked readily (simply note that in step (3), we only need one representative, as there is an element of $Z$ (which is thus in $T$) which fixes $s$ and takes $x$ to any other involution in the conjugacy class).

Having gone through these steps, it is very easy to compute the order of $T$, and $|T|/|Z|$ is the size of $s^T = X$. Very often, we find that $X= \Gamma(\CC)$ so that, when $G$ is simple, we can conclude that $\Delta(s) = G$. Otherwise, standard algorithms allow to iterate over the elements of $X$ and compute the group they generate.

\begin{rem}
To give a bit of perspective, let us mention the sort of performance we obtained with $G= \Omega_{12}^+(2)$ for some randomly chosen involution $s$. The naive algorithm took about 8 minutes to compute the neighbours of $s$, and there are around 800 of these, so we expected to have to wait 4 days to get the list of the vertices in $X$ which are at distance $2$ from $s$. By contrast, the algorithm above concludes that $\Delta(s)=G$ in less than 20 seconds.
\end{rem}

\subsection{Randomized calculations}

The above algorithm can still take a lot of time in certain cases (for example when the centralizer $Z$ has a complicated structure). However, another advantage of the approach we have just described is that it can be turned easily into a randomized algorithm. The idea is simply to draw elements of $\CC$ at random until one finds a neighbour of $s$ in $\Gamma(\CC)$; then, compute $t \in G$ such that the vertex just found is $s^t$, and keep a list of the elements $t$ thus obtained. At any moment, we can compute the group $T$ generated by $C_G(s)$ and the $t$'s that have been collected. If at any point $T = G$ (which happens frequently with our examples), of course we may stop and conclude that $\Delta(s) = G$ (when $G$ is simple). Otherwise, we can stop after a certain number of tries, and what we have is a subgroup $T$ of the "real" transport group, and we can compute the group $\Delta'(s)$ generated by $s^T$, which is a subgroup of the "real" component group $\Delta(s)$.

In this situation, we can try to compute the terminal component group $\Delta_\infty(s)$ instead, as in \S\ref{subsec-component-groups}.  Concretely, we check whether $\Delta'(s)$ (and so also $\Delta(s)$) contains an involution $s_1$ such that $s_1 \not\in \CC$ ; if there is one, we can run the algorithm with $s_1$, obtaining $\Delta'(s_1)$, a subgroup of $\Delta(s_1)$, and crucially, $\Delta'(s_1)$ is also a subgroup of $\Delta_\infty(s)$, by definition. We can continue and find $s_2$ in either $\Delta'(s)$ or $\Delta'(s_1)$, compute $\Delta'(s_2)$, which is yet another subgroup of $\Delta_\infty(s)$, and so on. We stop when there are no more elements to try (because all of their conjugacy classes have been attempted), or when there is $s_i$ such that $\Delta'(s_i) = G$, in which case we conclude that $\Delta_\infty(s) = G$ also.

This search is best organized, and more easily summarized, as follows. Our program constructs a graph whose vertices are the conjugacy classes of involutions in $G$. When we have found that $\Delta(s) = G$ for an involution $s$, we paint the corresponding vertex black. Otherwise, the vertex is left in white, and we add a directed arrow from the class of $s$ to that of any $s_1$ such that $s_1 \in \Delta'(s)$, our computed approximation of $\Delta(s)$. A sufficient condition for $\Delta_\infty(s) = G$ is thus that from the corresponding vertex, there is a directed path leading to a black vertex.

As an example, here is a graph produced when working with $G= \Omega_{12}^+(2)$. It shows that $\Delta_\infty(s) = G$ for any involution $s$.

\begin{center}
 \includegraphics[width=.5\textwidth]{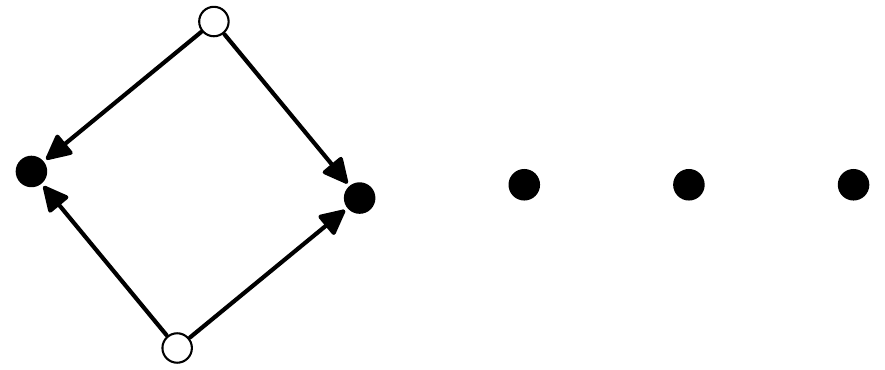}
\end{center}

In general, the graphs thus produced only reflect partial information, as obtained at a given point in time by the randomized calculation. Typically, we then focus our efforts on the white vertices, and start the full, non-random algorithm on them, to see if $\Delta(s)$ is really smaller than $G$. Of course, this may take a little longer.

\subsection{Results}

To conclude this section we record three results that will be important for the proof of Theorem~\ref{t: component}. These were all proved using the computational methods described above.

\begin{lem}\label{l: small classical} Let $G$ be one of the groups $\SL_6(2)$, $\PSU_n(2)$ ($4\le n\le 8$), $\Sp_{2n}(2)$ ($3\le n\le 6$), 
$\O_{2n}^\pm(2)$ ($4\le n\le 6$), and let $t \in G$ be an involution. Then one of the following holds:
\begin{itemize}
\item[{\rm (i)}] $\D(t) = G$;
\item[{\rm (ii)}]  $G = \PSU_n(2)$ or $\Sp_{2n}(2)$ and $t$ is a long root involution;
\item[{\rm (iii)}] $G$, $t$ are as in Table $\ref{tbl}$ of Theorem $\ref{t: component}$.
\end{itemize}
\end{lem}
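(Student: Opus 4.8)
The statement is essentially a finite check performed by the computational machinery of \S\ref{s: computing}: for each of the explicitly listed groups $G$ (a finite list of classical groups over $\F_2$) and each involution class $t^G$ (again a finite list, enumerated by Jordan/Witt-type invariants as recalled in \S\ref{orth} and the classical references), we compute the connected component $X$ of $\Gamma(t^G)$ containing $t$, and the subgroup $\D(t)$ it generates. So the plan is: first, for each $G$ in the list, enumerate its conjugacy classes of involutions — this is a standard, bounded task, and for the orthogonal and symplectic groups over $\F_2$ one uses the parametrization of involutions by orthogonal/symplectic decomposition type. Second, for each such class, run the transport-group algorithm: compute $Z = C_G(t)$, list the involution classes inside $Z$, test for each representative $x$ whether $x \in t^G$ and $tx \in t^G$, collect the conjugating elements together with $Z$ into the transport group $T$, and use the vertex-transitivity lemma to conclude $X = t^T$ and hence $|X| = |T|/|Z|$.

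The third step is the dichotomy. In the overwhelming majority of cases the algorithm returns $T = G$, whence $X = \Gamma(t^G)$ and, since $G$ is simple, $\D(t) = G$; this puts $t$ in case (i). When $T \neq G$, one inspects the generated subgroup $\D(t)$ directly (standard routines iterate over $t^T$ and compute the span): for $G = \PSU_n(2)$ or $\Sp_{2n}(2)$ with $t$ a long root involution, $\D(t)$ comes out to be the centre of a long root subgroup — an elementary abelian group whose order one reads off — and we are in case (ii); for the finitely many remaining exceptional pairs $(G,t)$ one records the data in Table~\ref{tbl}, i.e. case (iii). One also verifies in those exceptional cases the supplementary facts stated after the table, namely that $\D(t)$ is elementary abelian of the stated rank and, except for $\PSU_7(2)$, equals $O_2(C_G(t))$.

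The main obstacle is purely computational feasibility rather than any conceptual difficulty: the naive breadth-first exploration of $\Gamma(t^G)$ is hopeless for the larger groups (the remark on $\Omega_{12}^+(2)$ makes this explicit), so one genuinely needs the transport-group reduction, and for the heaviest cases — $\O_{2n}^\pm(2)$ and $\Sp_{12}(2)$ with $n=6$, and $\PSU_8(2)$ — even computing $C_G(t)$ and its involution classes, or iterating over $t^T$ when $T \neq G$, is delicate; here the randomized variant is used to produce the class-graph (black/white vertices with directed arrows) that first localizes which classes, if any, can fail $\D(t) = G$, after which the full deterministic algorithm is run only on those few candidate classes to confirm the exact value of $\D(t)$. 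A secondary point requiring care is bookkeeping: one must be sure the list of involution classes of each $G$ is complete (so that no class is silently omitted) and that for $\O_{12}^+(2)$ the entry $W(2)^3$ is correctly identified as two classes fused by an outer automorphism, so that the conclusion is genuinely exhaustive over all involutions $t \in G$.
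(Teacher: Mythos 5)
Your proposal matches the paper's approach exactly: the paper offers no separate written argument for this lemma, stating only that it (together with Lemmas~\ref{l: small exceptional} and \ref{l: terminal table}) ``were all proved using the computational methods described above,'' i.e.\ the transport-group algorithm and its randomized refinement from \S\ref{s: computing}, applied to each listed group and each of its involution classes. Your description of the algorithm, the dichotomy on whether $T=G$, and the bookkeeping over involution classes is a faithful account of what the authors actually did.
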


\begin{lem}\label{l: small exceptional}
 Let $G$ be equal to $^2\!F_4(2)'$ or $^3\!D_4(2)$ and let $t\in G$ be an involution. Then $\Delta(t)=G$.
\end{lem}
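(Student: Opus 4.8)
The plan is to verify the lemma by direct machine computation, using the transport-group method of \S\ref{s: computing}; both groups are comfortably within reach, $^2\!F_4(2)'$ (the Tits group) having order $2^{11}\cdot 3^3\cdot 5^2\cdot 13$ and $^3\!D_4(2)$ having order $2^{12}\cdot 3^4\cdot 7^2\cdot 13$, and each has only two conjugacy classes of involutions. First I would fix, for each $G$, representatives $t$ of the two involution classes (reading off the class structure from the ATLAS or the GAP character table library), and for each of them compute the centralizer $Z = C_G(t)$ together with its conjugacy classes of involutions.

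For each such $t$ I would then build the transport group $T$ exactly as in \S\ref{s: computing}: for one representative $x$ of each class of involutions of $Z$, test whether $x \in \CC := t^G$ and, if so, whether $tx \in \CC$; in the affirmative case record an element $g \in G$ with $t^g = x$, and finally set $T = \langle Z,\, g_1, \dots, g_d\rangle$, where $g_1, \dots, g_d$ are the elements so obtained. By the vertex-transitivity lemma of \S\ref{s: computing}, applied to the connected component $X$ of $t$ in $\Gamma(\CC)$ (on which $T$ acts by conjugation, preserving $X$ since each $g_i$ sends $t$ into $X$), we get $X = t^T$. I expect that $T = G$ in every case; then $X = \CC$, so $\Gamma(\CC)$ is connected, whence $\D(t) = \langle \CC\rangle$ is a non-trivial normal subgroup of the simple group $G$, and therefore $\D(t) = G$. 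Should some $T$ come out as a proper subgroup, one simply iterates over the elements of $X = t^T$ and checks directly that they generate $G$, or, failing that, replaces $\D(t)$ by the terminal component group $\Delta_\infty(t)$ and argues as in \S\ref{s: computing}.

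The one real obstacle is computational cost: a breadth-first search through $\Gamma(\CC)$ is impractical, since the number of vertices at distance two from $t$ is already far too large to enumerate naively (compare the remark on $\Omega_{12}^+(2)$ in \S\ref{s: computing}); it is precisely the reduction to the neighbours of $t$ via the transport group, together with the randomized variant when $Z$ has an awkward structure, that brings each case down to a matter of seconds. A minor bookkeeping point is to keep the two involution classes apart and to work inside $^2\!F_4(2)'$ rather than $^2\!F_4(2)$; cross-checking the computed centralizer orders and class sizes against the known character-table data removes any ambiguity there.
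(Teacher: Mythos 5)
Your proposal matches the paper's approach exactly: the lemma is established there by direct machine computation using the transport-group algorithm of \S\ref{s: computing} (the paper simply records that the result ``was proved using the computational methods described above''), and your description of the procedure, the group orders, and the two involution classes in each group is accurate. No issues.
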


\begin{lem}\label{l: terminal table}
 Let $G$ be a group occurring in Table~\ref{tbl} and let $t$ be the associated involution. Then $\Delta_\infty(t)=G$.
\end{lem}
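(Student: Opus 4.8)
The plan is to show that a single step up the chain $\Delta_1(t)=\Delta(t)\le\Delta_2(t)\le\cdots$ from \S\ref{subsec-component-groups} (writing $\Delta_i(t)=\Delta_i(t,\CC)$ for $\CC=t^G$) already reaches $G$. Recall that the computation behind Lemma~\ref{l: small classical}(iii) also identifies $\Delta(t)=\Delta_1(t)$ as the elementary abelian $2$-group $A$ recorded in Table~\ref{tbl}, and that $t\in A$. The engine of the argument is the following elementary remark: \emph{if $\Delta_i(t)$ is abelian, then $\Delta(s_1)\le\Delta_{i+1}(t)$ for every involution $s_1\in\Delta_i(t)$.} Indeed, put $A_i=\Delta_i(t)$ (so $t\in A_i$) and assume $s_1\ne t$; then $ts_1=ts_1^{-1}$ lies in $A_i$, has order $2$, and its $G$-class meets $A_i$. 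By the definition of the chain $\DD_i$, the classes $t^G$, $s_1^G$ and $(ts_1)^G$ are therefore all contained in $\DD_{i+1}$. Since $t$ and $s_1$ commute and $ts_1^{-1}\in\DD_{i+1}$, the vertices $t$ and $s_1$ are adjacent in $\Gamma(\DD_{i+1})$ by Definition~\ref{d: graph2}; hence $s_1$ lies in the connected component of $t$ in $\Gamma(\DD_{i+1})$. As $\Gamma(s_1^G)$ is a subgraph of $\Gamma(\DD_{i+1})$, the connected component of $s_1$ in $\Gamma(s_1^G)$ is contained in that of $t$ in $\Gamma(\DD_{i+1})$, and so $\Delta(s_1)\le\Delta(t,\DD_{i+1})=\Delta_{i+1}(t)$, proving the remark.

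Applying it with $i=1$: since $\Delta_1(t)=A$ is elementary abelian, it suffices to exhibit, in each of the six cases, a single involution $s_1\in A$ with $\Delta(s_1)=G$, for then $G=\Delta(s_1)\le\Delta_2(t)\le\Delta_\infty(t)\le G$. By Lemma~\ref{l: small classical}, an involution $s_1$ of $G$ fails to satisfy $\Delta(s_1)=G$ only when $G$ is $\PSU_n(2)$ or $\Sp_{2n}(2)$ and $s_1$ is a long root involution, or when $(G,s_1)$ is one of the six pairs of Table~\ref{tbl}. So the whole task reduces to checking that $A$ contains an involution lying in none of these excluded classes. This is carried out with the transport-group algorithm of \S\ref{s: computing}: one writes down $A$ explicitly, sorts its involutions into $G$-classes, picks a representative $s_1$ of a class that is not excluded, and runs the algorithm on $s_1$ to confirm $\Delta(s_1)=G$ — in the language of \S\ref{s: computing}, a directed path of length one from the vertex of $t$ to a black vertex. (Note that Lemma~\ref{l: small exceptional} is not needed, as every group in Table~\ref{tbl} is classical and covered by Lemma~\ref{l: small classical}.)

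I expect this last verification to be the only real work, and I do not anticipate a genuine obstruction. In each case $A$ is a large elementary abelian group (of order between $2^6$ and $2^{15}$) whose non-identity elements fall into several $G$-classes, of which only $t^G$ — together with the long-root class when $G$ is symplectic or unitary — is excluded; in particular, for $G=\SL_6(2)$ and $G=\O_{12}^+(2)$, where there is no long-root exception at all, any involution of $A$ outside $t^G$ will do. The main practical cost is that $A$ can be sizeable, so one benefits from the randomized variant of the algorithm when enumerating the classes meeting $A$; for the $\O_{12}^+(2)$ line one should run the check on both of the two $G$-classes interchanged by the outer automorphism, although the argument is identical for the two.
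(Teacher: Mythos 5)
Your argument is, in essence, a rigorous rendering of the route the paper itself takes: the paper proves this lemma purely computationally, via the algorithm of \S\ref{s: computing} that builds a directed graph on involution classes and concludes $\D_\infty(t)=G$ as soon as there is a directed path from the class of $t$ to a black vertex; your argument is precisely the case of a path of length one. What you add, and what the paper leaves implicit, is the justification that an arrow in that digraph really yields containment in $\D_2(t)$: the paper's ``by definition'' is not quite immediate in general, since an involution $s_1\in\D(t)$ need not a priori be joined to $t$ in $\Gamma(\DD_2)$. Your observation that this \emph{is} automatic when $\D(t)$ is abelian --- because then $ts_1$ is an involution lying in $\D(t)$, so the classes $t^G$, $s_1^G$ and $(ts_1)^G$ all land in $\DD_2$ and $t,s_1$ become adjacent in $\Gamma(\DD_2)$ --- is correct and is a genuinely useful supplement. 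Using Lemma~\ref{l: small classical} to identify the black vertices, instead of re-running the transport-group algorithm on $s_1$, is a legitimate economy, and you are right that Lemma~\ref{l: small exceptional} is not needed here.

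The one step you must not leave as an expectation is the verification that, in each of the six cases, $A=\D(t)$ really contains an involution outside the excluded classes (the Table~\ref{tbl} class(es) for $G$, together with the long root class when $G$ is symplectic or unitary). In your formulation this check \emph{is} the content of the lemma, and your argument is contingent on its succeeding at depth one: if some $A\setminus\{1\}$ were exhausted by excluded classes you would have to pass to $\D_2(t)$, which need not be abelian, and your remark would no longer apply. The check does in fact succeed --- for instance, for $\SL_6(2)$ with $t=(J_2^{\,3})$ the group $A=O_2(C_G(t))\cong 2^9$ contains involutions of Jordan type $(J_2^{\,2},J_1^{\,2})$, which are not excluded --- but it has to be recorded case by case; for $\O_{12}^+(2)$ both classes of type $W(2)^3$ must be avoided, and for $\PSU_7(2)$ note that $\D(t)\ne O_2(C_G(t))$, so you cannot simply read $A$ off the centralizer there but must use the explicitly computed component group.
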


Notice that Lemma~\ref{l: terminal table} implies that if $G=G(q)$ is a simple group of Lie type over $\Fq$ where $q=2^a$ and $t\in G$ is an involution, then either $\Delta_\infty(t)=G$ or else $G$ and $t$ are as described at points (ii) and (iii) of Theorem~\ref{t: component}.

\section{Proof of Theorem \ref{t: component}}\label{s: proof component}

\subsection{Long root elements}

Here we prove a lemma identifying component groups of long root elements for arbitrary characteristic. 
In the statement we do not include $G = \Sp_4(2)',\,G_2(2)'$ or $^2\!G_2(3)'$, but we do include $^2\!F_4(2)'$.

\begin{lem}\label{rooty} Let $G = G(q)$ be a simple group of Lie type over $\F_q$, where $q = p^a$, and let $t$ be a long root element. Then one of the following holds:
\begin{itemize}
\item[{\rm (i)}] $\D(t) = G$;
\item[{\rm (ii)}] $G = \PSp_{2n}(q)\,(n\ge 1)$ or $\PSU_n(q)\,(n\ge 3)$, and $\D(t)$ is the centre of a long root subgroup;
\item[{\rm (iii)}]  $G = \,^2\!G_2(q)$ or $^2\!B_2(q)$, and $\D(t)$ is the centre of a Sylow $p$-subgroup of $G$.
\end{itemize}
\end{lem}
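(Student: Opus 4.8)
The plan is to analyze the graph $\Gamma(\CC)$ where $\CC = t^G$ is the class of a long root element, using the well-understood structure of long root subgroups and their commutator relations. First I would reduce to the case where $t$ has prime order $p$: a long root element in characteristic $p$ always has order $p$, so this is automatic. Write $U_\alpha$ for the long root subgroup containing $t$, so $t \in U_\alpha \setminus \{1\}$. The key structural input is the classification of pairs of long root subgroups $(U_\alpha, U_\beta)$ up to conjugacy, governed by the angle between the roots $\alpha,\beta$ (equivalently, by the orbits of $G$ on pairs of opposite/adjacent/etc. long root subgroups), together with the Chevalley commutator formula. For two commuting long root elements $g \in U_\alpha$, $h \in U_\beta$, one checks when $gh^{-1}$ (or $g h$, since $p=2$ in the application but the argument should be given for general $p$) is again a long root element. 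The decisive dichotomy is: either $\alpha = \beta$, in which case $g,h$ and $gh^{-1}$ all lie in the same root subgroup $U_\alpha$; or $\alpha \ne \beta$ with $U_\alpha, U_\beta$ commuting, in which case — outside the symplectic and unitary cases and the two Suzuki/Ree families — one can always find $\beta$ with $[U_\alpha,U_\beta]=1$ and $gh^{-1}$ a long root element, and moreover the long root subgroups one reaches this way, together with conjugation, generate all of $G$.

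The main steps, in order, would be: (1) Recall from the literature (Aschbacher's work on root subgroups, or the Liebeck--Seitz treatment) the list of $G$-classes of commuting pairs of long root subgroups and the action of $G$ on them; note that $G$ is generated by any pair of opposite long root subgroups together with a maximal torus, and that the long root subgroups form a single conjugacy class of subgroups. (2) Show that if there exists a long root subgroup $U_\beta \ne U_\alpha$ commuting with $U_\alpha$ such that for suitable nontrivial $g \in U_\alpha$, $h \in U_\beta$ we have $gh^{-1} \in \CC$, then the connected component $X$ of $t$ in $\Gamma(\CC)$ spans enough root subgroups that $\langle X \rangle$ is normal in $G$; since $G$ is simple (or quasisimple, modulo the stated exceptions $\Sp_4(2)'$ etc.), $\langle X\rangle = G$, giving (i). (3) Identify exactly the groups where step (2) fails: this should force $G = \PSp_{2n}(q)$ or $\PSU_n(q)$, where all root subgroups commuting with $U_\alpha$ and "close" to it in the graph sense still lie inside a single long root subgroup (because in these cases the product of two distinct commuting long root elements lying in different root subgroups is never again a long root element — it has larger Jordan block structure or larger support), so that $\Delta(t) = U_\alpha = Z(\text{long root subgroup})$ — here using that in type $C_n$ and type $^2\!A_{n-1}$ the long root subgroups are the centres of the relevant unipotent radicals and are one-dimensional over the appropriate field; and $G = {^2\!B_2}(q)$ or $^2\!G_2(q)$, where there is only one conjugacy class of root subgroup available and the root elements generate only the centre of a Sylow $p$-subgroup, giving (iii). (4) Finally handle the small groups $\Sp_4(2)$, $G_2(2)$, $^2\!B_2(2)$, $^2\!G_2(3)$, $^2\!F_4(2)$ separately — but the statement excludes the non-simple derived-subgroup cases and includes $^2\!F_4(2)'$, which can be checked by the computational methods of \S\ref{s: computing} (indeed Lemma~\ref{l: small exceptional} covers $^2\!F_4(2)'$), while $^2\!B_2(2)$ and $^2\!G_2(3)$ are solvable or non-simple and excluded.

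The main obstacle I expect is step (3): proving the \emph{negative} direction cleanly, i.e. that for $\PSp_{2n}(q)$ and $\PSU_n(q)$ the component group really is no bigger than the centre of a long root subgroup. This requires showing that every edge of $\Gamma(\CC)$ out of a vertex in $U_\alpha$ stays inside $U_\alpha$ — equivalently, that whenever $g, h$ are commuting long root elements with $gh^{-1}$ also a long root element, then $g,h$ lie in a common long root subgroup. For symplectic groups this is a transvection computation (a product of two commuting symplectic transvections is a transvection iff their centres coincide), and for unitary groups a similar but more delicate computation with the Hermitian form; one must be careful with the characteristic $2$ phenomena (e.g. in $\Sp_{2n}(q)$ with $q$ even, transvections and certain other involutions can be conjugate issues) and with the precise identification of "long root involution" in these groups. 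A secondary obstacle is making sure the generation claim in step (2) is uniform across all the remaining Lie types (classical of types $B,D$ and $^2\!D$, and exceptional types $G_2, F_3=F_4, E_6, E_7, E_8, {^2\!E_6}, {^3\!D_4}, {^2\!F_4}$) — here one leans on the fact that these groups are generated by long root subgroups and that the long-root geometry is connected in the relevant sense, plus the small-rank computational checks already recorded as Lemmas~\ref{l: small classical}--\ref{l: terminal table}.
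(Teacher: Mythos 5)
Your overall strategy is the same as the paper's: classify the edges of $\G(\CC)$ at $t$ via the geometry of long root subgroups, prove generation in the generic case, and do explicit computations in types $C_n$, $^2\!A_{n-1}$, $^2\!B_2$, $^2\!G_2$. Your step (3) is essentially exactly what the paper does for the negative direction: for $\Sp_{2n}(q)$ and $\PSU_n(q)$ one checks that if $u\in C_G(t)$ with $u$ and $tu^{-1}$ both conjugate to $t$, then $u$ is a transvection with the same centre as $t$ (a product of two commuting transvections with distinct centres has two nontrivial Jordan blocks), so every neighbour of $t$ lies in the centre of the long root subgroup through $t$; the Suzuki and Ree cases are settled by the known structure and fusion of a Sylow $p$-subgroup $P=C_G(t)$, where all non-identity elements of $Z(P)$ are conjugate to $t$ and no $u\in P\setminus Z(P)$ gives an edge.

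The genuine gap is in your step (2), the positive direction. You assert that once there is a commuting pair of distinct long root subgroups $U_\a,U_\b$ with some $gh^{-1}\in\CC$, the component $X$ ``spans enough root subgroups that $\la X\ra$ is normal in $G$.'' Normality of $\D(t)$ is not automatic --- in cases (ii) and (iii) it fails badly --- and nothing in your sketch explains how the component, which a priori only accumulates conjugates of $U_\a$ commuting with vertices already reached, ever escapes from a unipotent subgroup; ``together with conjugation'' cannot mean conjugation by arbitrary elements of $G$, only by elements preserving the component. The paper's mechanism is concrete: choose long roots $\a,\b$ with $\a+\b$ also long (possible precisely when $\Phi_L$ contains an $A_2$-subsystem, which among untwisted types fails exactly for $C_n$ --- this is the root-theoretic criterion separating (i) from (ii), and your sketch never pins it down), and exhibit the explicit path $u_{-\b}(c_1)$ -- $u_\a(c_2)$ -- $u_{\a+\b}(c_3)$ -- $u_\b(c_4)$ -- $u_{-\a}(c_5)$ inside $\la U_{\pm\a},U_{\pm\b}\ra\cong\SL_3(q)$. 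It is this path that puts the \emph{opposite} root subgroup $U_{-\a}$, hence a semisimple subgroup $A_2(q)$, into $\D(t)$; one then varies $\b$ (with $\a=-\a_0$) to generate $G$ when there is one root length, and for $B_n$, $G_2$, $F_4$ one lands in a maximal subsystem subgroup and finishes using the $C_G(t)$-invariance of $\D(t)$. Without some version of this argument you have not established conclusion (i) in any case. (Two incidental slips: a pair of opposite long root subgroups together with a maximal torus generates only a reductive subgroup of semisimple rank one, not $G$; and in $\Sp_{2n}(2^a)$ the transvections form a single class not conjugate to any other involutions, so the ``characteristic $2$ conjugacy issues'' you worry about do not arise there.)
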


Proposition~\ref{p: pc}, proved at the end of the paper, asserts that if $G$ is one of the groups listed in items (ii) or (iii), $t$ is a long root element and $H=\D(t)$, then either $G=\PSp_2(q)$ with $q$ odd or the action of $G$ on the set of right cosets of $H$ in $G$ is binary.

\begin{proof} First assume that $G$ is not one of the groups under (ii) or (iii), and is of untwisted type. Let $\Phi$ be the root sytem of $G$, and $\Phi_L$ the set of long roots in $\Phi$ (so $\Phi_L = \Phi$ if there is only one root length). We can take $t = u_\a(1)$ with $\a \in \Phi_L$. As $G \ne C_n(q)$, there exists $\b \in \Phi_L$ such that $\a+\b \in \Phi_L$, and $\la U_{\pm \a}, U_{\pm \b} \ra = A_2(q)$. Working in $\SL_3(q)$ with the usual representation
\[
u_\a(c) = \begin{pmatrix} 1&c& \\&1& \\&&1 \end{pmatrix}, 
u_\b(c) = \begin{pmatrix} 1&& \\&1&c \\&&1 \end{pmatrix}, 
u_{\a+\b}(c) = \begin{pmatrix} 1&&c \\&1& \\&&1 \end{pmatrix}, 
\]
we see that the graph $\G(\mathcal C)$ has the following edges, for any $c_i \in \F_q\setminus 0$:
\[
u_{-\b}(c_1)\; \mbox{---}\; u_\a(c_2) \; \mbox{---}\; u_{\a+\b}(c_3) \; \mbox{---}\; u_\b(c_4) \; \mbox{---}\; u_{-\a}(c_5)
\]
Hence $\D(t)$ contains $\la U_{\pm \a}, U_{\pm \b} \ra = A_2(q)$. 

Now take $\a = -\a_0$, where $\a_0$ is the highest root. Then for any long root $\rho$ such that $\b = \a_0-\rho \in \Phi_L$, we see from the above that $\D(t)$ contains $U_{\pm \b}$. If there is only one root length, these root groups $U_{\pm \b}$, together with $U_{\pm \a}$, generate $G$; so $\D(t) = G$ in this case. Finally, suppose there are two root lengths. Then $\Phi$ is of type $B_n\,(n\ge 3)$, $G_2$ or $F_4$, and the root groups $U_{\pm \b}$ for $\b \in \Phi_L$ generate a subsystem subgroup $D_{n-1}(q)$, $A_2(q)$ or $D_4(q)$ respectively. So $\D(t)$ contains this subsystem subgroup, which is maximal in $G$. Also $\D(t)$ is invariant under $C_G(t)$, which is the derived group of a parabolic subgroup of $G$, and is not contained in the subsystem subgroup. It follows that $\D(t) = G$ in this case also.

Now assume that $G$ is of twisted type, and is not one of the groups under (ii) or (iii). For $G = \,^2\!D_n(q)\,(n\ge 4)$, $^3\!D_4(q)$  or $^2\!E_6(q)$, the above argument gives the result, as there is a subsystem $A_2$ spanned by long roots. And for $G = \,^2\!F_4(q)'$, we argue as follows. The involution $t$ lies in a subgroup $^2\!F_4(2)'$ of $G$, and Lemma \ref{l: small exceptional} implies that $\D(t) \ge \,^2\!F_4(2)'$. As $\D(t)$ is $C_G(t)$-invariant, it follows that $\D(t) = G$.

Now we let $G$ be one of the group under (ii) and (iii) and we must describe $\Delta(t)$. We start by assuming that $G = \PSp_{2n}(q)$. With respect to a suitable standard basis $e_1,\ldots ,e_n,f_n,\ldots, f_1$, modulo the scalars $Z = \la -I\ra$, we can take
\begin{equation}\label{rep}
t = \begin{pmatrix} 1&&\l \\&I_{2n-2}& \\&&1 \end{pmatrix},
\end{equation}
for some scalar $\l \ne 0$, and then 
\[
C_G(t) = \left\{ \begin{pmatrix} \e &x&c \\&A&AJx^T \\&&\e \end{pmatrix} : A \in \Sp_{2n-2}(q),\,x \in \F_q^{2n-2},\,c \in\F_q,\,\e = \pm 1\right\}/Z,
\]
where $J$ is the matrix of the form restricted to $e_2,\ldots ,e_n,f_n,\ldots, f_2$. The only elements $u \in C_G(t)$ for which 
both $u$ and $tu^{-1}$ are conjugate to $t$ are those with $A=I_{2n-2}$ and $x = 0$. Hence 
\begin{equation}\label{ugp}
u \in \left\{ \begin{pmatrix} 1&&c \\&I_{2n-2}& \\&&1 \end{pmatrix} : c \in \F_q\right\} = U,
\end{equation}
a long root subgroup. Hence $\D(t) = U$ in this case.

The proof of (ii) for $G = \PSU_n(q)$ is very similar. Take $V$ to be an $n$-dimensional unitary space over $\F_{q^2}$, with unitary form having Gram matrix with 1's on the reverse diagonal and 0's elsewhere. Then we can take $t$ to be as in (\ref{rep}) with $\l+\l^q = 0$, and we compute as above that the only elements $u \in C_G(t)$ for which 
both $u$ and $tu^{-1}$ are conjugate to $t$ lie in a subgroup $U$ defined as in (\ref{ugp}), but with the condition on the scalar $c \in \F_{q^2}$ being $c+c^q=0$. Then $U$ is the centre of a long root subgroup of $G$, and $\D(t)=U$, proving (ii) in this case. 

Now let $G = \,^2\!G_2(q)$ or $^2\!B_2(q)$ as in (iii), with $q = p^{2a+1}$, where $p=2$ or 3, and $a\ge 1$. Then $C_G(t) = P$, a Sylow $p$-subgroup of $G$. The structure and fusion of $P$ is described in \cite{Suzuki, Ward}: $Z(P)$ is elementary abelian of order $q$ and has all its non-identity elements conjugate to $t$; also there is no element $u \in P\setminus Z(P)$ such that $u$ and $tu^{-1}$ are conjugate to $t$. Hence $\D(t) = Z(P)$ in these cases, completing the proof.
  \end{proof}

We now prove Theorem \ref{t: component} separately for the various Lie types.

\subsection{Linear groups}

\begin{lem}\label{psl} Theorem $\ref{t: component}$ holds for $G = \PSL_n(q)$, $q = 2^a$.
\end{lem}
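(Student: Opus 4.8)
The plan is to transfer the computation to $\SL_n(q)$ and induct on $n$, anchoring the small cases on Lemma~\ref{l: small classical}. Since $q$ is even, $Z(\SL_n(q))$ has odd order, so every involution of $\PSL_n(q)$ lifts to an involution of $\SL_n(q)$; and since an involution of $\SL_n(q)$ is never conjugate to a non-trivial central multiple of itself, the involution classes of the two groups correspond and $\Gamma(\mathcal C)$, hence $\D(t)$, may be computed in $\SL_n(q)$. The involutions of $\SL_n(q)$ are the unipotent elements of Jordan type $(J_2^{\,k},J_1^{\,n-2k})$, $1\le k\le n/2$. If $n=2$ then $\SL_2(q)=\Sp_2(q)$, $t$ is a long root involution, and Lemma~\ref{rooty}(ii) gives case (ii) of Theorem~\ref{t: component}. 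If $n\ge 3$ and $k=1$ then $t$ is a long root element and Lemma~\ref{rooty}(i) gives $\D(t)=G$. So the work is in the range $n\ge 4$, $2\le k\le n/2$, where the target is $\D(t)=G$, with the unique exception $(n,q,k)=(6,2,3)$ yielding the entry $2^9$ of Table~\ref{tbl}.

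For the inductive step, suppose $n>2k$. Then $t$ fixes a vector $v\notin W:=\mathrm{im}(t-1)$ and stabilises a hyperplane $U$ with $v\notin U$, so the subgroup $\SL(U)\cong\SL_{n-1}(q)$ (acting trivially on $\langle v\rangle$) contains $t$, now of type $(J_2^{\,k},J_1^{\,n-1-2k})$. Since $t^{\SL(U)}\subseteq t^G$, the connected component of $t$ in $\Gamma(t^{\SL(U)})$ lies inside its connected component in $\Gamma(\mathcal C)$, so $\D_{\SL(U)}(t)\le\D(t)$; by induction $\D_{\SL(U)}(t)=\SL(U)$ — the base $n-1=2k$ being treated below, the case $\SL_6(2)$ with $k=3$ by Lemma~\ref{l: small classical}, and $(n,q)=(7,2)$, $t=(J_2^{\,3},J_1)$ by a direct argument (its reduction lands in the exceptional $\SL_6(2)$, but in $\SL_7(2)$, unlike in $\SL_6(2)$, the connected component of $t$ already contains conjugates not supported on $W$). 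Thus $\D(t)$ contains a copy of $\SL_{n-1}(q)$, which is contained in exactly two maximal subgroups of $G$, the stabilisers of a point and of a hyperplane, and one checks from the explicit shape of $C_G(t)$ that it lies in neither. As $\D(t)$ is normalised by $C_G(t)$, we get $N_G(\D(t))\supseteq\langle\SL_{n-1}(q),C_G(t)\rangle=G$, so $\D(t)=G$ by simplicity.

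The base case — the regular involution $t$ of type $J_2^{\,k}$ in $\SL_{2k}(q)$ — must be handled directly, and is where I expect the main difficulty to lie. For $t=\left(\begin{smallmatrix}I_k&I_k\\0&I_k\end{smallmatrix}\right)$ one computes $C_G(t)=q^{k^2}\!:\!\SL_k(q)$, the unipotent radical of the stabiliser $P_W$ of $W=\mathrm{im}(t-1)$ extended by a diagonally embedded $\SL_k(q)$, and one verifies that $P_W$ is the unique maximal subgroup of $\SL_{2k}(q)$ containing $C_G(t)$ (the orthogonal and unitary subgroups in Aschbacher's class $\mathcal C_8$ are too small to contain it, and the symplectic one is ruled out since the unipotent radical $q^{k^2}$ preserves no symplectic form). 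One then exhibits, for the relevant values of $q$, an involution $t'$ of type $J_2^{\,k}$ with $[t,t']=1$, $tt'\in\mathcal C$ and $\mathrm{im}(t'-1)\ne W$; writing $t'=t^g$ and forming the transport group $T=\langle C_G(t),g\rangle$ of \S\ref{s: computing}, one gets $T\not\le P_W$, hence $T=G$, hence $\D(t)=\langle t^T\rangle=G$. The crucial subtlety is that this construction is impossible exactly when $(k,q)=(3,2)$: over $\F_2$ with $k=3$ every neighbour of $t$ in $\Gamma(\mathcal C)$ turns out to be supported on $W$, so the connected component generates only $O_2(C_G(t))=2^9$. That case, together with any residual small linear group over $\F_2$ not reached by the induction, is precisely what Lemma~\ref{l: small classical} is invoked to settle, and one finally checks that no other pair $(k,q)$ is exceptional, which completes the argument.
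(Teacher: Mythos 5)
Your overall architecture (reduce to $\SL_n(q)$, peel off Jordan blocks, induct, finish with $C_G(t)$-invariance) matches the paper's, but your treatment of the crux — the base case $t=(J_2^{\,k})\in\SL_{2k}(q)$ — is genuinely different and is where the real gaps lie. You hang everything on two assertions that are stated but not proved. First, that $P_W$ is the \emph{unique} maximal subgroup of $\SL_{2k}(q)$ containing $C_G(t)=q^{k^2}{:}\SL_k(q)$: you dispose only of class $\mathcal{C}_8$, and the argument you give for the symplectic case (that $q^{k^2}$ ``preserves no symplectic form'') is not a proof; nothing is said about the classes $\mathcal{C}_2$--$\mathcal{C}_6$, $\mathcal{S}$, or the other reducible maximals, and this uniqueness statement has to be verified for every $k\ge 2$ and every even $q$. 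The paper avoids needing any such statement: it shows $\D(t)$ contains two \emph{explicit} subgroups ($\SL_2(q)\otimes\SL_2(q)=\Omega_4^+(q)$ and $O_2(C_G(t))$ for $n=4$, several block-permuted copies of $\SL_4(q)$ in general), whose joint overgroup structure is read off from \cite[Tables 8.8, 8.9]{bhr}. Second, the existence of a neighbour $t'$ with $\mathrm{im}(t'-1)\ne W$ for every $(k,q)\ne(3,2)$ is exactly the content that has to be supplied, and you defer it entirely (``one exhibits\ldots''); it is true — decompose $t$ as $(J_2^{\,2})\oplus(J_2^{\,k-2})$, move the first factor via $J_2\otimes I\mapsto I\otimes J_2$ in $\SL_4(q)$, and pair it with $(J_2(\a)^{\,k-2})$ when $q>2$, or with a second such $\SL_4$-neighbour / a suitable element of $O_2(C(t_2))$ when $q=2$ — but writing this down and checking the Jordan type of the product \emph{is} the proof, and it is what the paper's $J_2(\a)$ computation does.

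Two further points. Your safety net is broken: Lemma~\ref{l: small classical} covers only $\SL_6(2)$ among the linear groups, so it cannot ``settle any residual small linear group over $\F_2$'' — in particular it does not cover $\SL_4(2)$ (which the paper handles separately inside $A_8\cong L_4(2)$), nor $\SL_8(2)$, $\SL_{10}(2)$, \dots should your construction need checking there. And the case $t=(J_2^{\,3},J_1)\in\SL_7(2)$, whose reduction lands on the exceptional $\SL_6(2)$ entry of Table~\ref{tbl}, is dispatched with an unspecified ``direct argument''; merely knowing that the component contains a conjugate not supported on $W$ does not by itself identify the group generated. The paper instead rewrites $t$ as $(J_2^{\,2})\oplus(J_2,J_1)\in\SL_4(2)\times\SL_3(2)$ and composes neighbours in both factors simultaneously. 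Your inductive step for $n>2k$ (embed $t$ in the $\SL_{n-1}(q)$ stabilising a fixed vector and a complementary hyperplane, then use that this subgroup lies in exactly two maximal subgroups, neither containing $C_G(t)$) is sound and is essentially the mechanism the paper uses for the unitary groups; but as it stands the proposal does not constitute a proof of the linear case until the base-case assertions above are actually established.
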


\begin{proof} Let $G = \PSL_n(q)$ with $q=2^a$. If $n=2$, the only involution class in $G$ contains a root element $t = J_2 \in \SL_2(q) = \Sp_2(q)$, and this class is covered by Lemma \ref{rooty}.  

So assume $n\ge 3$. 
We begin by establishing the result for $n\le 4$. For $n=3$, the group $\PSL_3(q)$ has only one class of involutions, with representative $t = (J_2,J_1)$, a long root element, so this case is covered by Lemma \ref{rooty}. Now let $n=4$, $G = \PSL_4(q)$. Again, root elements $(J_2,J_1^2)$ are covered by Lemma \ref{rooty}, so let $t = (J_2^2) \in G$. We can take $t = J_2\otimes I \in \SL_2(q) \otimes \SL_2(q) < G$. Then $C_G(t)$ contains $I \otimes \SL_2(q)$, and each involution $u$ in this group is conjugate to $t$; also $tu = J_2\otimes J_2$ is conjugate to $t$. Hence $t$ is joined in $\G(\mathcal C)$ to all such involutions $u$. It follows that $\D(t)$ contains $I \otimes \SL_2(q)$. Similarly, arguing with neighbours of $u$, we see that $\SL_2(q) \otimes I \le \D(t)$. Thus
\[
\D(t) \ge \SL_2(q) \otimes \SL_2(q).
\]
Also, if we write $t$ as $\begin{pmatrix} I & I \\ 0 & I \end{pmatrix}$, then as in the proof of Lemma \ref{l: small classical}, we see that $\D(t)$ contains $N:=O_2(C_G(t)) \cong q^4$. Using \cite[Tables 8.8,8.9]{bhr}, we see that for $q \ge 4$, the only overgroups in $G$ of $\SL_2(q) \otimes \SL_2(q) = \O_4^+(q)$ are $O_4^+(q)$ and $\Sp_4(q)$, and hence $\la \SL_2(q) \otimes \SL_2(q) ,\,N \ra = G$. Finally, for $q=2$ we argue in $A_8 \cong L_4(2)$ that $\D(t) = G$ here too. This completes the proof for $n=4$.
Now suppose $n\ge 5$. Adopt the following notation, for $\l \in \F_q$:
\[
J_2(\l) = \begin{pmatrix} 1 & \l \\ 0 & 1 \end{pmatrix}, \;J_2 = J_2(1),\;, t = (J_2^a,J_1^b),
\]
where $2a+b=n$. By Lemma \ref{rooty} we can assume that $a\ge 2$.

Assume next that $q>2$, and let $\a \in \F_q \setminus \{0,1\}$. Suppose $b\ge 1$. Write $t = t_1\oplus t_2$, where $t_1 =(J_2,J_1)$, $t_2 = (J_2^{a-1},J_1^{b-1})$. Let $u_2 = (J_2(\a)^{a-1},J_1^{b-1})$. If $u_1 \in \SL_3(q)$ is an involution joined to $t_1$ in the graph on $t_1^{SL_3(q)}$, then $t$ is joined to $u = u_1\oplus u_2$ in the graph on $t^G$. Hence by the result for $\SL_3(q)$, we have $\D(t) \ge \SL_3(q)$. This holds for any choice of blocks $(J_2,J_1)$ in $t$, and these $\SL_3(q)$ subgroups generate $G$. So $\D(t) = G$ in the case where $b\ge 1$.

Now suppose $b=0$ (still assuming that $q>2$). For this case, write  $t = t_1\oplus t_2$, where $t_1 =(J_2^2)$, $t_2 = (J_2^{a-2})$. Let $u_2 = (J_2(\a)^{a-2})$, and argue as above using the result for $\SL_4(q)$ that $\D(t)$ contains the subgroup $\SL_4(q)$ corresponding to $t_1$. This holds for any choice of blocks $J_2^2$ in $t$, so again we see that $\D(t) = G$. 

It remains to deal with the case where $q=2$. It is convenient first to deal with $n=5,6$. For these cases we have $t = (J_2^2,J_1^{n-4})$ (excluding the exceptional case $(J_2^3) \in \SL_6(2)$ as it is conclusion (iii) of Theorem \ref{t: component}). 
For $n=5$, we write $t = t_1\oplus t_2$, where $t_1 = (J_2^2)$, $t_2 = J_1$. Arguing as above using the result for $\SL_4(q)$, we see that $\D(t)$ contains $S:=SL_4(q)$. Also $\D(t)$ is $C_G(t)$-invariant, and we can compute a $C_G(t)$-conjugate $S^c$ of $S$ such that $\la S,S^c\ra = G$. Thus $\D(t) = G$ in the case $n=5$. For $n=6$ we argue similarly, taking 
$t = t_1\oplus t_2$, where $t_1 = (J_2,J_1)$, $t_2 = (J_2,J_1)$ and using the result for $\SL_3(q)$.

Finally, assume that $n\ge 7$ (with $q=2$). Write $t = t_1\oplus t_2$, where 
\[
t_1 = \left\{\begin{array}{l} (J_2,J_1),\hbox{ if }a=2  \\ (J_2^2), \hbox{ if }a\ge 3 \end{array} \right.
\]
Then $t_1,t_2$ are involutions in $\SL_{n_1}(2)$, $\SL_{n_2}(2)$, where $n_1=3$ or 4, $n_2 = n-n_1$. If 
$u_i \in \SL_{n_i}(2)$ ($i=1,2$) are involutions joined to $t_i$ in the graphs on $t_i^{SL_{n_i}(2)}$, then $u = u_1\oplus u_2$ is joined to $t$ in $\G(\mathcal C)$. Hence inductively we have $\D(t) \ge \SL_{n_1}(2) \times \SL_{n_2}(2)$ (the second factor possibly replaced by $2^9$ in the case where $n=10$, $t = (J_2^5)$). By re-ordering the blocks in $t$, we obtain several different such subgroups in $\D(t)$, and these generate $G$. This completes the proof. 
\end{proof}

\subsection{Unitary groups}\label{unitary}

\begin{lem}\label{psu} Theorem $\ref{t: component}$ holds for $G = \PSU_n(q)$ with $n\ge 3$,  $q = 2^a$.
\end{lem}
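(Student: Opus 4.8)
\medskip\noindent\textit{Proof proposal.} The plan is to follow the proof of Lemma~\ref{psl} as closely as possible, using orthogonal direct sums of non-degenerate Hermitian subspaces in place of direct sums; I flag below the two features that make the unitary case genuinely different. Write $G = \PSU_n(q) = \SU(V)$ for an $n$-dimensional Hermitian space $V$ over $\F_{q^2}$, $q = 2^a$ (recall $\PSU_4(q) = \SU_4(q)$ since $q$ is even). As $q$ is even, every involution of $G$ is unipotent of Jordan type $(J_2^{\,r}, J_1^{\,n-2r})$; the classes with $r = 1$, namely the transvection classes, are exactly the long-root classes, and Lemma~\ref{rooty} disposes of these, giving conclusion~(ii) of Theorem~\ref{t: component}. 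So assume $r \geq 2$; we must show $\D(t) = G$ unless $(G,t)$ is $(\PSU_6(2), (J_2^{\,3}))$ or $(\PSU_7(2), (J_2^{\,3}, J_1))$. (The component group is conjugation-invariant, so any splitting of the $\GU$-class of $t$ into several $\SU$- or $\PSU$-classes is harmless, the argument being uniform in the representative.)

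For the base cases, $q = 2$ with $4 \leq n \leq 8$ is exactly Lemma~\ref{l: small classical}, which together with Lemma~\ref{l: terminal table} also furnishes and confirms the two exceptions. The one further base case is $\PSU_4(q)$ with $t = (J_2^{\,2})$: for $q = 2$ use Lemma~\ref{l: small classical} once more, and for $q > 2$ argue exactly as in the $\SL_4$ case of Lemma~\ref{psl}. Namely, the tensor product of the natural Hermitian forms on two copies of $\F_{q^2}^2$ gives an embedding $\SU_2(q) \otimes \SU_2(q) \hookrightarrow \SU_4(q)$ with $\SU_2(q) \otimes \SU_2(q) \cong \SL_2(q) \times \SL_2(q) \cong \Omega_4^+(q)$, in which $t$ may be taken to be $J_2 \otimes I$; one then shows $\D(t) \supseteq \SU_2(q) \otimes \SU_2(q)$ and $\D(t) \supseteq O_2(C_G(t)) \cong q^4$, and \cite{bhr} gives that these two subgroups generate $\SU_4(q)$.

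For the inductive step, take $n \geq 5$ and decompose $V = W_1 \perp W_2$ with $t = t_1 \perp t_2$, choosing $t_1 \in \SU(W_1)$ so that $\D(t_1, t_1^{\SU(W_1)}) = \SU(W_1)$ is already available --- typically $t_1$ of type $(J_2^{\,2})$ on a $4$-space, but when $q = 2$ and $r = 3$ rather $t_1 = (J_2^{\,3}, J_1^{\,2}) \in \SU_8(2)$, so as to avoid the exceptional classes --- and, when $q = 2$, also so that $t_2$ is not a long-root involution, since over $\F_2$ a long-root involution is an isolated vertex of its own graph and so cannot appear as a summand; for this it suffices to let $t_1$ carry all the $J_2$-blocks except $0$ or at least $2$ of them. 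As in Lemma~\ref{psl}, for each neighbour $u_1$ of $t_1$ in $\Gamma(t_1^{\SU(W_1)})$ (paired with a companion $u_2$: when $q > 2$ replace each $J_2$-block of $t_2$ by one with off-diagonal entry a fixed non-zero $\alpha$ satisfying $\alpha + \alpha^q = 0$ and $\alpha \neq 1$, which exists precisely because $q > 2$; when $q = 2$ take $u_2$ a neighbour of $t_2$, or $u_2 = 1$ if $t_2 = 1$) the element $u_1 \perp u_2$ is a neighbour of $t$ in $\Gamma(\CC)$, and hence $\D(t) \supseteq \SU(W_1) \times \D(t_2, t_2^{\SU(W_2)})$, the second factor being $\SU(W_2)$, or $2^9$ for the two exceptional pairs, or (when $t_2$ is long-root, which happens only when $q > 2$) the centre of a root subgroup. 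Letting $W_1$ vary over all admissible choices and using that $\D(t)$ is $C_G(t)$-invariant, one checks, again with \cite{bhr} to control overgroups, that the resulting subgroups generate $G$.

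The two features peculiar to the unitary case, and the source of the real work, are: (i) the recursion cannot bottom out at $\PSU_3(q)$ --- whose unique involution class is long-root, with component group only the centre of a root subgroup --- so $\PSU_4(q)$ must play the role of the base case and every reduction must be routed through it (or through the small groups of Lemma~\ref{l: small classical}); and (ii) over $\F_2$ a long-root involution has no neighbour in its own graph, which forces the slightly awkward choices of $t_1$ above. I expect the principal obstacle to be the final generation step: verifying that the classical subgroups $\SU(W_1)$ so produced, together with $O_2(C_G(t))$ where it enters, actually generate $\PSU_n(q)$ --- this rests, as in Lemma~\ref{psl}, on the overgroup tables of \cite{bhr}.
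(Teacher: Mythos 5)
Your proof is correct and follows essentially the same route as the paper: Lemma~\ref{rooty} for the long root classes, Lemma~\ref{l: small classical} (together with Lemma~\ref{l: terminal table}) for the small $q=2$ cases and the two exceptions, the $\SL_4$-style tensor argument for $\PSU_4(q)$ with $q>2$, and induction via orthogonal decompositions with the $J_2(\alpha)$ companion trick for $q>2$. The only difference is bookkeeping in the inductive step: for $q>2$ the paper strips a single $J_1$- or $J_2$-block (reducing to $\SU_{n-1}(q)$ or $\SU_{n-2}(q)$ and then invoking $C_G(t)$-invariance), whereas you uniformly strip a nondegenerate $4$-space carrying $(J_2^{\,2})$ --- the same device the paper itself uses in its $q=2$, $b=0$ subcase --- and the two difficulties you flag (no $\PSU_3$ base case; long-root summands over $\F_2$ admitting no companion) are precisely the points the paper's case division is designed to circumvent.
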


\begin{proof} Let $G = \PSU_n(q)$ with $q=2^a$. If $n=3$ then there is only one class of involutions, namely the long root elements, and these are covered by Lemma \ref{rooty}. So we may suppose that $n\ge 4$, and also that $t \in G$ is an involution that is not a long root element. 

If $n=4$ then $t = (J_2^2)$ and we argue as in the proof of Lemma \ref{psl} (second paragraph) that $\D(t) = G$, noting at the end that the case $q=2$ is covered by Lemma \ref{l: small classical}. 

Now assume that $n\ge 5$ and $q>2$. Let $t = (J_2^a,J_1^b)$ (with $a\ge 2$, as $t$ is not a long root element). We argue by induction on $n$. Write $t = t_1\oplus t_2$, where $t_1 = (J_2^a,J_1^{b-1})$ if $b\ge 1$, and $t_1 = (J_2^{a-1})$ if $b=0$. Then inductively, $\D(t_1) = \SU_{n-1}(q)$ or $\SU_{n-2}(q)$ in the respective cases. Moreover $\D(t)$ contains this subgroup: in the first case this is clear, and in the second we can use neighbours of $t$ of the form $u_1\oplus u_2$, where $u_2 = J_2(\a)$, as in the proof of Lemma \ref{psl}. As $\D(t)$ is $C_G(t)$-invariant, it follows that $\D(t)=G$.

Finally, consider the case $n\ge 5$, $q=2$. By Lemma \ref{l: small classical}, we may ssume that $n\ge 9$. Let $t = (J_2^a,J_1^b)$ with $a\ge 2$. If $b\ge 1$, write $t = t_1\oplus t_2$, where $t_1 = (J_2^a,J_1^{b-1})$, and argue inductively as above (using Lemma \ref{l: small classical} for the base case where $t_1 \in \SU_8(2)$). Now suppose $b=0$, and write $t = t_1\oplus t_2$, where $t_1 = (J_2^2)$, $t_2 = (J_2^{a-2})$. If $a>5$, then inductively we have $\D(t) \ge \SU_4(2) \times \SU_{n-4}(2)$; also $\D(t)$ contains several subgroups of this form corresponding to different pairs of $J_2$-blocks, and these generate $G$. and if $a=5$, then according to the entry for $\PSU_6(2)$ in Table \ref{tbl}, $\D(t_2) = 2^9$, and so we see that $\D(t)$ contains $\D(t_1) \times \D(t_2) = \SU_4(2)\times 2^9$. The collection of such subgroups $\SU_4(2)$, one for each pair of $J_2$-blocks in $t$, generates $G$, and hence $\D(t) = G$ in this case, completing the proof. 
\end{proof}

\subsection{Symplectic and Orthogonal groups}\label{orth}

In this section we consider the symplectic and orthogonal groups $ G = \Sp_{2n}(q)$, $\O_{2n}^\e(q)$ with $q = 2^a$ and $\e = \pm$. Note that $O_{2n}^\e(q) < \Sp_{2n}(q)$, and let $V = V_{2n}(q)$ be the natural module. We begin by describing the involution classes in $G$, with notation and results taken from \cite[Chapters 4-6]{lieseitz3}. There is an involution in $O_2^\e (q)\setminus \O_2^\e(q)$ (hence also in $\Sp_2(q)$), which we denote by $V(2)$. Also the group $\O_4^+(q)$ has a subgroup $\SL_2(q)$ stabilizing a pair of totally singular 2-spaces, and we denote an involution in this $\SL_2$ subgroup by $W(2)$; it acts as $J_2^2$ on the 4-dimensional space. Denote by $W(1)$ the identity element of $\O_2^\e(q)$. Then, with one exceptional case,  every involution $t \in G$ is uniquely determined up to conjugacy by an orthogonal decomposition
\begin{equation}\label{eq}
t = W(1)^a + W(2)^b + V(2)^c,
\end{equation}
where $c \le 2$  (and $n = a+2b+c$). The exceptional case is  $t = W(2)^b \in \O_{4b}^+(q)$, in which case there are two $G$-classes of involutions which are interchanged by elements of $O_{4b}^+(q)\setminus \O_{4b}^+(q)$. 

Note that in (\ref{eq}), $c$ can only be 1 if $G = \Sp_{2n}(q)$. Also, when $G$ is orthogonal and $c=2$, there are actually two possibilities for the blocks $V(2)^2$ -- in the notation of \cite{GLO}, they could be $V(2)+V(2) \in \O^+_4(q)$ or $V(2)+V_\a(2) \in \O_4^-(q)$ (where $\a \in \F_q$ is such that the quadratic $x^2+x+\a$ is irreducible over $\F_q$). For our proof below it is not necessary to distinguish between these cases, so we use the notation $V(2)^2$ for both.

\begin{lem}\label{sp} Theorem $\ref{t: component}$ holds for $G = \Sp_{2n}(q)$ with $n\ge 2$,  $q = 2^a$.
\end{lem}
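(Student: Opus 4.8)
The plan is to work through the involution classes of $G=\Sp_{2n}(q)$ described in \eqref{eq}, namely $t = W(1)^a + W(2)^b + V(2)^c$ with $c\le 2$ and $n=a+2b+c$, and to show that in every case $\D(t)=G$ except when $t$ is a long root involution (covered by Lemma~\ref{rooty}), when $t$ is a short root involution in $\Sp_4(q)$ with $q>2$ (conclusion (iii)), or when $(G,t)$ is one of the exceptional pairs $\Sp_6(2)$, $\Sp_{12}(2)$ in Table~\ref{tbl}. First I would dispose of the long root involutions via Lemma~\ref{rooty}; in the present notation these are the elements $V(2)$ (equivalently $W(1)^{n-1}+V(2)$), for which $\D(t)$ is the centre of a long root subgroup. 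Then I would handle the remaining involutions in the small cases $\Sp_{2n}(2)$ with $3\le n\le 6$ by appealing directly to Lemma~\ref{l: small classical}, which already records that for these groups either $\D(t)=G$, or $t$ is a long root involution, or $(G,t)$ is in Table~\ref{tbl}. This also settles $\Sp_4(2)'$ (not in the hypothesis of the theorem anyway) and the $q=2$ base cases needed for induction.

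The heart of the argument is an inductive reduction on $n$ of exactly the same flavour as in the proofs of Lemmas~\ref{psl} and~\ref{psu}. Given $t = W(1)^a + W(2)^b + V(2)^c$ that is not a long root involution, I would write the natural module $V$ as an orthogonal direct sum $V = V_1 \perp V_2$ compatibly with a decomposition $t = t_1 \perp t_2$, where $t_1$ lives in a smaller symplectic (or orthogonal) group of rank $n_1$ and carries a controlled piece of the Jordan structure — for instance a single $W(2)$-block together with one $V(2)$-block, or two $W(2)$-blocks — chosen so that the corresponding $\D(t_1)$ is already known (by Lemma~\ref{rooty}, Lemma~\ref{l: small classical}, or induction) to be the full smaller classical group. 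The key point, established just as in Example~\ref{ex: well known}/Lemma~\ref{psl}, is that if $u_1$ is joined to $t_1$ in $\Gamma(t_1^{\,G_1})$ then $u_1 \perp u_2$ is joined to $t$ in $\Gamma(\CC)$, where $u_2$ is a suitable conjugate of $t_2$ (when $q>2$ one takes $u_2$ to involve a $J_2(\alpha)$-block with $\alpha\neq 0,1$ so that $u_2$ is $G_2$-conjugate to $t_2$ but the pair $(t,u)$ still has the right class data; when $q=2$ one instead keeps $t_2$ fixed and uses the $q=2$ base cases). This yields $\D(t) \ge G_1$ (a subsystem subgroup $\Sp_{2n_1}(q)$, or $\O_{2n_1}^\pm(q)$, or possibly a $2^9$ in the $\SL_6(2)$-flavoured exceptional slot), and then varying the choice of blocks produces several such subgroups; I would check that these generate $G$, or, failing an immediate generation statement, invoke $C_G(t)$-invariance of $\D(t)$ — $C_G(t)$ is described explicitly above and is not contained in any single proper subsystem subgroup — to upgrade to $\D(t)=G$.

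Two special situations need separate care. The short root involutions: in $\Sp_4(q)$ with $q>2$ the only non-long-root involution is $W(2)\in \O_4^+(q)\le \Sp_4(q)$, a short root element, and here I expect to compute $\D(t)$ by hand from $C_G(t)$ exactly as in the $\PSp_{2n}$ part of Lemma~\ref{rooty} — determining which $u\in C_G(t)$ have both $u$ and $tu$ conjugate to $t$ — to get that $\D(t)$ is a short root subgroup of order $q$, conclusion (iii); for $\Sp_{2n}(q)$ with $n\ge 3$ a $W(2)$-block sits inside an $\Sp_4(q)$ with an extra block available, so the inductive machinery applies and $\D(t)=G$. The exceptional classes $\Sp_6(2)$ with $t = W(2)+V(2)$ and $\Sp_{12}(2)$ with $t = W(2)^3$: these are precisely the cases where the induction base (from Lemma~\ref{l: small classical}) does \emph{not} give the full group, and they are simply carried over as conclusion (iv) / Table~\ref{tbl}; one must only make sure not to use such a $t$ as the "known" factor $t_1$ when building larger cases — equivalently, when a $W(2)^b$ or $W(2)+V(2)$ factor would land in one of these Table~\ref{tbl} groups, choose the split so that the exceptional piece is $t_2$ (whose $\D$ one does not need to be full) and the full piece $t_1$ is a genuinely larger-group situation, then again close up via $C_G(t)$-invariance. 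The main obstacle I anticipate is organising this bookkeeping cleanly: ensuring for every $(a,b,c)$ with $q=2$ and $n$ large that some admissible split has its $t_1$-factor covered by an already-proved full-$\D$ statement, and that the resulting subsystem subgroups together with $C_G(t)$ really do force $\D(t)=G$ rather than merely a large proper subgroup — in particular handling the borderline values of $n$ (around $n=6,7$) where $\O_{12}^+(2)$ and $\Sp_{12}(2)$ exceptional behaviour could otherwise leak in.
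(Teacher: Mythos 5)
Your overall strategy --- dispose of long root elements via Lemma~\ref{rooty}, use Lemma~\ref{l: small classical} for the groups over $\F_2$, and then induct on $n$ by splitting $t=t_1\perp t_2$ and transferring edges of $\Gamma(t_1^{G_1})$ to edges of $\Gamma(\CC)$, closing up with $C_G(t)$-invariance --- is exactly the paper's, including the care needed to keep the exceptional $\Sp_{12}(2)$ class out of the $t_1$-slot (the paper does this for $\Sp_{14}(2)$ by rewriting $t$ as $(W(2)^2)\oplus(W(2)+W(1))$). Your treatment of the short root involution by direct computation in $C_G(t)$ is a workable alternative to the paper's slicker observation that the graph automorphism of $\Sp_4(q)$ swaps the long and short root classes, so that conclusion (iii) follows immediately from Lemma~\ref{rooty}.

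However, there is a genuine gap in your base case $n=2$, $q>2$: you assert that $W(2)$ is the \emph{only} non-long-root involution class in $\Sp_4(q)$. This is false. By the classification (\ref{eq}), $\Sp_4(q)$ with $q>2$ has three involution classes, $V(2)+W(1)$ (long root), $W(2)$ (short root) and $V(2)^2$, and your argument never addresses $V(2)^2$. This omission matters twice over: first, the theorem must be verified for that class (the paper does so by noting that $V(2)^2$ lies in a subgroup $\Sp_4(2)'\cong A_6$, arguing inside $A_6$ that $\D(t)\ge A_6$, and then using $C_G(t)$-invariance to conclude $\D(t)=G$); second, the statement $\D(V(2)^2)=\Sp_4(q)$ is itself a base case consumed by the induction, e.g.\ the class $W(2)+V(2)\sim V(2)^3$ in $\Sp_6(q)$, $q>2$, is handled precisely by splitting off a $V(2)^2\in\Sp_4(q)$ factor. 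Without this case your induction cannot get started on the classes with $c=2$ (and more generally the $V(2)$-blocks), since the only other available $t_1$ inside $\Sp_4(q)$ carrying a $V(2)$-block is a long root element, whose component group is merely a root subgroup. You should add the $V(2)^2\in\Sp_4(q)$ computation explicitly, and more generally make sure that for $q>2$ and $3\le n\le 6$ you exploit the embedding of a class representative into $\Sp_{2n}(2)$ together with Lemma~\ref{l: small classical}, which is how the paper establishes all the base cases before the induction for $n\ge 7$ begins.
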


\begin{proof} Let $G = \Sp_{2n}(q)$ with $q=2^a$. Note that the class of long root elements of $G$ is represented by $t = V(2)+W(1)^{n-1}$, and the conclusion of Theorem \ref{t: component} for this class follows from Lemma \ref{rooty}. Henceforth we do not consider this class.

Suppose first that $n=2$ and $q>2$. The classes of long and short root elements of $G$ are interchanged by a graph automorphism , so the conclusion of the theorem for short root elements follows from Lemma \ref{rooty}. The remaining  involution class in $G$ contains $t = V(2)^2$. This element lies in a subgroup $\Sp_4(2)' \cong A_6$ of $G$, and arguing in $A_6$ we see that $\D(t) \ge A_6$. As $\D(t)$ is $C_G(t)$-invariant, it follows that $\D(t) = G$ for this class.

Next consider $n=3$. If $q=2$ then $G = \Sp_6(2)$, which is covered by Lemma \ref{l: small classical}, so assume $q>2$. The classes to consider are those containing $t = W(2)+W(1)$, $V(2)^2+W(1)$ and $W(2)+V(2)$. In the first two cases $t$ lies in a subgroup $\Sp_6(2)$, so Lemma \ref{l: small classical} shows that $\D(t) \ge \Sp_6(2)$, and now the $C_G(t)$-invariance of $\D(t)$ shows that it is equal to $G$. In the last case, $t$ is conjugate to $V(2)^3$, and writing $t = t_1\oplus t_2$ with $t_1 = V(2)^2 \in \Sp_4(q)$, we see that $\D(t)$ contains $\D(t_1) = \Sp_4(q)$ and hence that $\D(t) = G$. 

If $n = 4$ or 5 then since any representative $t$ as in (\ref{eq}) lies in a subgroup $\Sp_{2n}(2)$ of $G$, Lemma \ref{l: small classical} shows that $\D(t)$ contains $\Sp_{2n}(2)$, and then $C_G(t)$-invariance gives $\D(t) = G$. The same argument applies for $n=6$, except for the class $t = W(2)^3$ (the exceptional class in $\Sp_{12}(2)$ in Table \ref{tbl}). In this case, Lemma \ref{tbl} gives the conclusion if $q=2$, and for $q>2$ we write $t = t_1 \oplus t_2$ with $t_1 = W(2)^2$, $t_2=W(2)$ and see that $\D(t)$ contains $\D(t_1) = \Sp_8(q)$, hence that $\D(t) = G$.

Suppose finally that $n\ge 7$. Let $t \in G$ be as in (\ref{eq}). We argue by induction on $n$, having established the base cases $n=3,4,5,6$. 

Assume $a\ge 1$, and write $t = t_1\oplus t_2$ with $t_1 = W(1)^{a-1} + W(2)^b + V(2)^c$, $t_2 = W(1)$. Then $\D(t)$ contains $\D(t_1)$, which inductively is $\Sp_{2n-2}(q)$, unless $t_1 = W(2)^3 \in \Sp_{12}(2)$ and $G = \Sp_{14}(2)$. In the former case, $\D(t) = G$ in the usual way; in the latter, rewrite $t$ as $(W(2)^2)\oplus (W(2)+W(1)) \in \Sp_8(2) \times \Sp_6(2)$ - then Lemma \ref{l: small classical} gives $\D(t) \ge \Sp_8(2) \times \Sp_6(2)$, and now we see that $\D(t) = G$ as usual.

So we may assume $a=0$. Then $b\ge 3$, and we can write $t = t_1\oplus t_2$ with $t_1=W(2)^2$, $t_2 = W(2)^{b-2}+V(2)^c$. Then $\D(t)$ contains $\D(t_1) = \Sp_8(q)$, and this is the case for any pair of $W(2)$-blocks making up $t_1$. Hence $\D(t) \ge \Sp_{2n-2c}(q)$, and then $\D(t) = G$ in the usual way.
\end{proof}

\begin{lem}\label{orthrest} Theorem $\ref{t: component}$ holds for $G =  \O_{2n}^\e(q)$ with $q = 2^a$, $n\ge 4$ and $\e = \pm$.
\end{lem}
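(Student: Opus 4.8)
The plan is to follow the same strategy used for the symplectic groups in Lemma~\ref{sp}, namely an induction on $n$ that reduces an arbitrary involution to smaller-dimensional constituents, combined with the orthogonal decomposition (\ref{eq}) and the $C_G(t)$-invariance of $\D(t)$. First I would dispose of the long root involutions, which are represented by $t = V(2)^2 + W(1)^{n-2}$ (or rather the relevant $V(2)^2$ block in $\O_4^{\pm}(q)$): these are handled directly by Lemma~\ref{rooty}, and since $G = \O_{2n}^\e(q)$ with $n \ge 4$ is not among the groups in items (ii) or (iii) of that lemma, we get $\D(t) = G$. Henceforth only involutions of the form (\ref{eq}) with $b \ge 1$ need to be considered (and the exceptional class $W(2)^b \in \O_{4b}^+(q)$ must be kept in mind).

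Next I would establish the base cases. For $4 \le n \le 6$ and $q = 2$, everything is covered by Lemma~\ref{l: small classical}, which gives $\D(t) = G$ except for the single exceptional line $t = W(2)^3 \in \O_{12}^+(2)$ in Table~\ref{tbl} (whose terminal component group is dealt with by Lemma~\ref{l: terminal table}, not here). For $q > 2$ and $n = 4, 5, 6$, I would observe that any representative $t$ as in (\ref{eq}) lies in a natural subgroup $\O_{2n}^\e(2)$ of $G$, so Lemma~\ref{l: small classical} gives $\D(t) \ge \O_{2n}^\e(2)$, and then $C_G(t)$-invariance forces $\D(t) = G$ — with the one caveat that for $t = W(2)^3 \in \O_{12}^+(q)$, $q > 2$, I would instead split $t = W(2)^2 \oplus W(2)$ into $\O_8^+(q) \times \O_4^+(q)$, use $\D(W(2)^2) = \O_8^+(q)$ from the induction (or from a small-case check), and conclude as usual. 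One subtlety in the $q>2$ reduction is making sure the subgroup of type $\O_{2n}^\e(2)$ really sits inside $\O_{2n}^\e(q)$ with the correct form type; this is standard but should be stated.

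For the inductive step $n \ge 7$, I would write $t = t_1 \oplus t_2$ following (\ref{eq}): if $a \ge 1$, peel off a $W(1)$ block, so $t_1 = W(1)^{a-1} + W(2)^b + V(2)^c \in \O_{2n-2}^\e(q)$ and $t_2 = W(1) \in \O_2^{+}(q)$ (or appropriate sign), giving $\D(t) \ge \D(t_1) = \O_{2n-2}^\e(q)$ inductively, whence $\D(t) = G$ by $C_G(t)$-invariance — with the exceptional-case detour when $t_1$ would be $W(2)^3 \in \O_{12}^+(2)$, in which case I re-split as $W(2)^2 \oplus (W(2)+W(1)^{\cdot})$ to avoid the table entry, exactly as in Lemma~\ref{sp}. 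If $a = 0$, then $b \ge 3$ (since we have excluded the long root class and $n \ge 7$ forces this once $c \le 2$), and I split off $t_1 = W(2)^2 \in \O_8^+(q)$, so $\D(t) \ge \D(t_1) = \O_8^+(q)$; ranging over the different pairs of $W(2)$-blocks, these subgroups generate a large subsystem subgroup, and together with $C_G(t)$-invariance this yields $\D(t) = G$.

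The main obstacle I anticipate is the bookkeeping of form types and of the exceptional class $W(2)^b \in \O_{4b}^+(q)$: one must check that when decomposing $t = t_1 \oplus t_2$ the summands have the correct Witt type so that the inductive hypothesis genuinely applies (e.g. that $V(2)^2$ can be taken in $\O_4^+$ or $\O_4^-$ as needed, and that peeling off blocks does not accidentally land one in the ambiguous $W(2)^b$ situation), and that the generation claims ``these subgroups generate $G$'' are justified — this is where one needs the maximal-subgroup information (as in \cite{bhr, lieseitz3}) or an explicit computation of a $C_G(t)$-conjugate $S^c$ with $\langle S, S^c\rangle = G$, just as was done for $\PSL_5(2)$ in Lemma~\ref{psl}. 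Everything else is routine once the base cases from Lemma~\ref{l: small classical} and Lemma~\ref{l: terminal table} are in hand.
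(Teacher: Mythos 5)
Your overall strategy is exactly the paper's: induction on $n$ with base cases $4\le n\le 6$ handled via Lemma~\ref{l: small classical} (embedding a representative in a subgroup $\O_{2n}^{\delta}(2)$ and using $C_G(t)$-invariance), the exceptional class $W(2)^3\in\O_{12}^+(q)$ treated by splitting off $W(2)^2\in\O_8^+(q)$, and the step $n\ge 7$ run exactly as in Lemma~\ref{sp}. That part is fine.

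There is, however, one concrete error in your first paragraph: the long root involutions of $\O_{2n}^{\e}(q)$, $q$ even, are \emph{not} represented by $V(2)^2+W(1)^{n-2}$. Long root elements of type $D_n$ are Siegel-type elements $e_i\mapsto e_i+\lambda e_j$ whose image of $t-1$ is totally singular; in the notation of \S\ref{orth} this is the class $W(2)+W(1)^{n-2}$ (the class $a_2$), whereas $V(2)^2+W(1)^{n-2}$ is the class $c_2$. As written, you then declare that only classes with $b\ge 1$ remain, which excludes $V(2)^2+W(1)^{n-2}$ from the induction on the strength of Lemma~\ref{rooty} -- but that lemma does not apply to this class, so one conjugacy class is left covered only by an inapplicable citation. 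The repair is immediate: simply do not exclude any class. Your own base cases already cover $V(2)^2+W(1)^{n-2}$ for $n\le 6$, and for $n\ge 7$ it has $a=n-2\ge 1$, so peeling off a $W(1)$ block handles it by induction; this is in fact what the paper does (it never invokes Lemma~\ref{rooty} in the orthogonal case and runs the induction over all classes of the form (\ref{eq})). With that correction your argument coincides with the paper's proof.
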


\begin{proof} Let $t \in G$ be as in (\ref{eq}). 
The proof goes by induction on $n$, and runs along entirely similar lines to that of Lemma \ref{sp}. We first establish the bases cases $n=4,5,6$ in exactly the same way, using Lemma \ref{l: small classical} to see that $\D(t)$ contains a subgroup $\O_{2n}^\d(2)$ and hence that $\D(t) = G$, in all cases except $t = W(2)^3 \in \O_{12}^+(q)$. For the latter class, the case $q=2$ is covered by Lemma \ref{l: small classical}, and for $q>2$ we write $t = t_1 \oplus t_2$ with $t_1 = W(2)^2$, $t_2=W(2)$; hence
$\D(t) \ge\D(t_1) = \Sp_8(q)$, from which we deduce that $\D(t) = G$.

Finally, for $n\ge 7$ we argue by induction exactly as in the proof of Lemma \ref{sp}.
\end{proof}

\subsection{Exceptional groups of Lie type}

\begin{lem}\label{excep} Theorem $\ref{t: component}$ holds for $G=G(q)$, a simple group of exceptional Lie type, where $q = 2^a$.
\end{lem}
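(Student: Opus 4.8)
\textbf{Proof proposal for Lemma \ref{excep}.}

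The plan is to treat the finitely many exceptional types $G = \,^2\!B_2(q), \,^2\!G_2(q), G_2(q), \,^3\!D_4(q), \,^2\!F_4(q), F_4(q), E_6(q), \,^2\!E_6(q), E_7(q), E_8(q)$ (with $q=2^a$) one at a time, and to split the involution classes in each into two groups: long root involutions, and everything else. For the long root classes, Lemma \ref{rooty} already gives the answer directly --- namely $\D(t)=G$ for all the exceptional types except $\,^2\!B_2(q)$ and $\,^2\!G_2(q)$, where $\D(t)$ is the centre of a Sylow $2$-subgroup, landing us in conclusion (ii) of Theorem \ref{t: component} (noting $\,^2\!G_2(q)$ has odd $q$ so only $\,^2\!B_2(q)$ is relevant here under $q=2^a$). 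So the whole content of the lemma is to show that \emph{every non-long-root involution} $t$ in an exceptional group of Lie type in characteristic $2$ has $\D(t)=G$, i.e. conclusion (i).

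The main tool will be the same ``descent to a subsystem subgroup'' strategy used in Lemmas \ref{psl}--\ref{orthrest}, combined with $C_G(t)$-invariance of $\D(t)$. Concretely, for a non-long-root involution $t$, I would locate a subsystem subgroup (or more generally a reductive subgroup) $M \le G$ containing $t$, whose type is classical of characteristic $2$, such that within $M$ the element $t$ is an involution for which Theorem \ref{t: component} --- already proved for classical groups in Lemmas \ref{psl}, \ref{psu}, \ref{sp}, \ref{orthrest} --- forces $\D_M(t)$ to be all of $M$ (or at least a large known subgroup); since edges of $\Gamma(\mathcal C)$ computed inside $M$ are also edges in $G$, this yields $\D_G(t) \ge M$. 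Then, because $\D_G(t)$ is normalised by $C_G(t)$ and $C_G(t)$ is not contained in $M$ (it involves a parabolic-type or larger subgroup), the only $C_G(t)$-invariant overgroup of $M$ is $G$ itself, giving $\D_G(t)=G$. For the very small groups $\,^2\!F_4(2)'$ and $\,^3\!D_4(2)$ this is exactly Lemma \ref{l: small exceptional}; for $\,^2\!F_4(q)$ with $q>2$ one bootstraps from the $q=2$ case as in the proof of Lemma \ref{rooty}; for $G_2(q)$, $F_4(q)$ one uses the classification of involution classes and their centralizers (from \cite{lieseitz3}) together with an $A_1\tilde A_1$, $A_2$, $B_2=C_2$, $C_3$ or $B_4=D_4$-type subsystem subgroup containing $t$; and for $E_6, \,^2\!E_6, E_7, E_8$ one uses that each non-long-root involution lies in a subsystem subgroup of type $D_n$ or a product of classical factors on which the classical case of Theorem \ref{t: component} applies.

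The main obstacle I anticipate is organising the case analysis for the large exceptional groups $E_6(q), \,^2\!E_6(q), E_7(q), E_8(q)$: one must check, class by class, that the chosen subsystem subgroup $M$ really does contain a representative of $t$ as the right kind of involution (so that the classical case applies and returns $M$ rather than a proper subgroup such as a root subgroup), and that $C_G(t) \not\le M$ so the invariance argument closes. The delicate point is that in characteristic $2$ the involution classes and their centralizers are subtler than in odd characteristic (e.g. the $a_i$/$b_i$/$c_i$ distinctions in the Aschbacher--Seitz classification), so one must be careful that $t$, viewed inside $M$, is not a long or short root element of $M$ --- otherwise the classical result only gives a root subgroup. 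I would handle this by always choosing $M$ large enough (e.g. a $D_4$ or $D_5$ or $D_6$ subsystem subgroup) that $t$ restricted to $M$ has at least two nontrivial Jordan blocks of the relevant type, placing it outside the root-element classes of $M$; the centralizer structure in \cite{lieseitz3} then guarantees $C_G(t) \not\le M$, completing the argument.
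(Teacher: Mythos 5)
Your overall strategy (long root classes via Lemma \ref{rooty}; all other classes by locating a classical subsystem subgroup $M$ containing $t$ as a non-root involution, applying the already-proved classical cases to get $\D(t)\ge M$, then closing with $C_G(t)$-invariance; Lemma \ref{l: small exceptional} plus bootstrapping for $^2\!F_4$ and $^3\!D_4$) is exactly the paper's approach for $E_6$, $^2\!E_6$, $E_7$, $E_8$, $F_4$, $^2\!F_4$ and $^2\!B_2$. But there is a genuine gap at the one case your escape hatch cannot reach: the short root class $\tilde A_1$ of $G_2(q)$. You propose to avoid $t$ being a root element of $M$ by ``always choosing $M$ large enough, e.g.\ a $D_4$ or $D_5$ or $D_6$ subsystem subgroup'', but $G_2$ has rank $2$: its only proper subsystem subgroups are the long-root $A_2(q)$ and $A_1\tilde A_1$. (The short roots of $G_2$ do not form a closed subsystem in characteristic $2$, so there is no short-root $\SL_3(q)$ to descend to.) A short root involution does not lie in the long-root $A_2(q)$, and inside $A_1\tilde A_1$ it is a root element of the factor $\tilde A_1\cong \Sp_2(q)$, so the classical result hands you back only the root subgroup $U_\a$ -- precisely the failure mode you flagged, with no larger $M$ available. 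The paper has to argue this case from scratch: it exhibits explicit neighbours $u=xu_\a(\l)$ of $t$ in $\G(\CC)$ to get $\D(t)\ge A_1\times U_\a$, upgrades this to $\D(t)\ge \la C_G(t),C_G(u)\ra$ using $C_G(t)=[q^3].A_1(q)$, and then shows $\la C_G(t),C_G(u)\ra=G$ by checking that the only candidate maximal overgroup is a parabolic and ruling it out via the commutator relations. This case also cannot be skipped, since the paper's treatment of $^3\!D_4(q)$ for $q\ge 4$ reduces to it.

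A secondary point to watch when you execute the subsystem descent over $\F_2$: the classical cases of Theorem \ref{t: component} have the exceptional classes of Table \ref{tbl}, so restricting $t$ to a subsystem subgroup such as $D_6(2)\cong\O_{12}^+(2)$ may land you on a class (e.g.\ $W(2)^3$) for which the classical lemma returns only an elementary abelian $2$-group rather than all of $M$; you then need either several such subsystem subgroups generating $G$, or a different choice of $M$. Your plan as written does not flag this, and for $q=2$ it affects exactly the inputs you intend to use.
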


Note that we exclude $G_2(2)'$ in the hypothesis.

\begin{proof}
First we consider the simple groups $G(q)$ of untwisted Lie type. A convenient list of conjugacy classes of involutions in these groups $G(q)$ can be found in the tables in \cite[Chapter 22]{lieseitz3}. Class representatives are products of involutions in Levi subgroups $A_1^k$, as listed in the tables. 

\vspace{2mm}
\no {\bf Case $G = E_8(q)$. }  There are four involution classes, corresponding to Levi subgroups $A_1^k$ for $k=1,2,3,4$. For $k=1$ this is a root involution, covered by Lemma \ref{rooty}. 

For $k=2$, we can take as representative $t = u_{\a_4}(1)u_{\a_8}(1)$ with the usual labelling of the Dynkin diagram. Then $t$ lies in a subsytem subgroup $A_7(q)$ generated by $U_{\pm \a_i}$ for $i=1,3,4,5,6,7,8$; it also lies in a subsystem $A_6(q)$ generated by $U_{\pm \a_i}$ for $i=2,4,5,6,7,8$. By Lemma \ref{psl}, $\D(t)$ contains both of these subsystem subgroups. Together they generate $G$, so $\D(t) = G$.

Now consider $k=3$, so $t \in A_1^3$. Here we can take $t =  u_{\a_4}(1) u_{\a_6}(1) u_{\a_8}(1)$, and we see  that $\D(t)$ contains subsystem subgroups $A_7(q)$ and $A_6(q)$ as in the previous case.

Finally consider $k=4$. A Levi subgroup $A_1^4$ lies in a subsystem subgroup $A_8$, so Lemma \ref{psl} gives $\D(t) \ge A_8(q)$, which is maximal. Also $\D(t)$ is $C_G(t)$-invariant, and $C_G(t) = [q^{84}].C_4(q)$ by \cite{lieseitz3}. Hence $\D(t) = G$.

\vspace{2mm}
\no {\bf Case $G = E_7(q)$. } Apart from root elements, there are four involution classes, labelled $A_1^2$, $(A_1^3)^{(1)}$, 
$(A_1^3)^{(2)}$, $A_1^4$, with representatives as follows:
\[
\begin{array}{l}
A_1^2: u_{\a_4}(1)u_{\a_7}(1) \\
(A_1^3)^{(1)}: u_{\a_2}(1) u_{\a_5}(1) u_{\a_7}(1) \\
(A_1^3)^{(2)}: u_{\a_3}(1) u_{\a_5}(1) u_{\a_7}(1) \\
A_1^4: u_{\a_2}(1)u_{\a_3}(1) u_{\a_5}(1) u_{\a_7}(1) 
\end{array}
\]
The $A_1^2$ class is dealt with as for $E_8(q)$. For $t$ in one of the $A_1^3$ classes, $t$ lies in a subsystem subgroup $D_6(q)$, so by Lemma \ref{orthrest} we have $\D(t)\ge D_6(q)$; one can adjoin roots to those defining the $A_1^3$ Levis to obtain several different $D_6$ subsystem subgroups, all of which lie in $\D(t)$ and generate $G$. Finally, for $t$ in the $A_1^4$  class, $t$ again lies in a subsytem $D_6(q)$, so $\D(t)$ contains $D_6(q)$ and is $C_G(t)$-invariant (here $C_G(t) = [q^{42}].C_3(q)$), which is enough to force $\D(t) = G$.

\vspace{2mm}
\no {\bf Cases $G = E_6(q)$, $F_4(q)$. } These are dealt with in similar fashion -- the class representatives $t$ lie in subsystem subgroups $D_5(q)$ or $D_4(q)$ respectively, so $\D(t)$ contains this subgroup and is $C_G(t)$-invariant, which forces $\D(t) = G$.

\vspace{2mm}
\no {\bf Case $G = G_2(q)$. } Here $q\ge 4$ (as we excluded $G_2(2)'$ from the hypothesis). Apart from long root involutions, there is one involution class $\tilde A_1$, with representative $t=u_{\a}(1)$, where $\a$ is a short root. Take $\a = \a_1+2\a_2$, where $\a_1,\a_2$ are simple roots with $\a_2$ short, and let $\Phi$ be the root system. 
Now $\tilde A_1 = \la U_{\pm \a}\ra$ is centralized by $A_1 = \la U_{\pm \a_1}\ra$, and the action of $A_1\tilde A_1$ on the 6-dimensional module $V_6=V_G(\l_1)$ is $1\otimes 1 \oplus 0\otimes 1^{(2)}$ (where 1 denotes the natural 2-dimensional module -- see \cite[Chapter 11]{lieseitz3}). In particular the long and short root involutions act on $V_6$ as $(J_2^2,J_1^2)$ and $(J_2^3)$, respectively. 
For $\l \in \F_q\setminus\{0,1\}$, and for any involution $x \in A_1$, the element $u = xu_{\a}(\l)$ is conjugate to both $t$ and $tu$; so $u$ is joined to $t$ in the graph $\G(\mathcal C)$. Hence $\D(t)$ contains $A_1\times U_{\a}$. It is also $C_G(t)$-invariant, and $C_G(t) = [q^3].A_1(q)$, from which we see that $\D(t) \ge C_G(t)$. Since $u$ is joined to $t$, also $\D(t) \ge C_G(u)$. We need to finally check that $\la C_G(t),C_G(u)\ra = G$. We shall prove this with $x = u_{\a_1}(1)$ (so 
$u = u_{\a_1}(1)u_{\a}(\l)$).

Suppose that $\la C_G(t),C_G(u)\ra \ne G$, and let $M$ be a maximal subgroup of $G$ containing $\la C_G(t),C_G(u)\ra $. From the maximal subgroups of $G$ (see for example \cite{bhr}), we see that $M$ must be the parabolic subgroup $P = \la U_{\pm \a_1}, \,U_\b \; : \b \in \Phi^+\ra$.

Write $U = \la U_\b : \b \in \Phi^+\ra$. Since $C_G(u) \le P$,  there exists $v \in U$ such that $A_1^{v^{-1}} \le C_G(u)$, and so $u^v \in C_G(A_1) = \tilde A_1$. Since $u^v \in U$, it follows that $u^v \in \tilde A_1 \cap U = U_\a$. However we can see from the commutator relations in $U$ that this is not possible. 

Therefore  $\la C_G(t),C_G(u)\ra = G$, completing the proof for $G_2(q)$.

\vspace{2mm}
We now move on to the twisted exceptional groups $G(q)$. For these, there are convenient lists of involution class representatives given in \cite{as}.

\vspace{2mm}
\no {\bf Case $G = \,^2\!E_6(q)$. } The involution class representatives $t$ lie in a subsystem subgroup $^2\!D_5(q)$, so by Lemma \ref{orthrest}, $\D(t)$ contains this subgroup and is $C_G(t)$-invariant, which forces $\D(t) = G$.

\vspace{2mm}
\no {\bf Case $G = \,^2\!F_4(q)'$. } For $q=2$, Lemma~\ref{l: small exceptional} implies that for both the involution class representatives $t \in \,^2\!F_4(2)'$ we have $\D(t) \ge \,^2\!F_4(2)'$.
Now assume $q\ge 4$. Note that $G$ has two classes of involutions. From \cite[p.75]{Atlas} we see that there are no involutions in $^2\!F_4(2)\setminus ^2\!F_4(2)'$. Hence both involution classes are represented by elements $t$ in a subgroup $^2\!F_4(2)'$ of $G$, and so by the $q=2$ case we have $\D(t) \ge \,^2\!F_4(2)'$. Also $\D(t)$ is $C_G(t)$-invariant, and it follows that $\D(t) = G$.

\vspace{2mm}
\no {\bf Case $G = \,^3\!D_4(q)$. } Again there are two involution classes, and both are represented by involutions $t$ in a subgroup $G_2(q)$ of $G$. Hence by the $G_2(q)$ case, for $q\ge 4$ we have $\D(t) \ge G_2(q)$, and so $\D(t) = G$ by the $C_G(t)$-invariance. Finally, for $q=2$ we have $\D(t) = G$ by Lemma~\ref{l: small exceptional}.

\vspace{2mm}
\no {\bf Case $G = \,^2\!B_2(q)$. } Here $q\ge 8$ and there is just one involution class $t^G$, and by Lemma \ref{rooty}, $\D(t)$ is the centre of a Sylow $2$-subgroup.

This completes the proof for exceptional groups.
\end{proof}

\section{Proof of Theorem \ref{t: char 2}}\label{s: proof char 2}

For the first part of this section $G=G(q)$ is a simple group of Lie type over $\Fq$, where $q=2^a$. We consider the binary action of $G$ on a set $\Omega$. We assume that $H$ is the stabilizer in $G$ of a point $\omega\in \Omega$ and that $H$ is a proper subgroup of $G$ of even order.

By \cite[Lemma~2.5]{ggAltBin} we know that the action of $G$ on $(G:H)$, the set of cosets of $H$ in $G$, is binary (or, put another way, we can assume that the action of $G$ is transitive). Since $H$ has even order, there exist involutions in $G$ that fix elements of $(G:H)$. We take $g$ to be an involution in $H$ of maximal 2-fixity for the action of $G$ on $(G:H)$. Now the following lemma immediately yields Theorem~\ref{t: char 2}:

\begin{lem}
 One of the following holds:
 \begin{itemize}
\item[{\rm (i)}] the element $g$ is a long root involution, $G = \Sp_{2n}(q)$, $\PSU_n(q)$ or ${^2\!B_2}(q)$, and $\D(g)$ is the centre of a long root subgroup that is contained in $H$ (the centre of a Sylow $2$-subgroup when $G = {^2\!B_2}(q)$);
\item[{\rm (ii)}] the element $g$ is a short root involution, $G = \Sp_{4}(q)$ with $q>2$, and $\D(g)$ is a short root subgroup that is contained in $H$. 
\end{itemize}
If $G={^2\!B_2}(q)$, then $H=\D(g)$.
\end{lem}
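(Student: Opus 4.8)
The plan is to combine the structural input of Theorem~\ref{t: component} with the extremal properties of elements of maximal $2$-fixity developed in \S\ref{s: background}, in particular Lemma~\ref{l: terminal component}. First I would apply Lemma~\ref{l: graph} (or the more refined Lemma~\ref{l: terminal component}) to the involution $g\in H$: since the action of $G$ on $(G:H)$ is binary and $g$ has maximal $2$-fixity, we get $\D(g)\le H$, and in fact $\Delta_\infty(g)\le H$. Now invoke Theorem~\ref{t: component} to describe $\D(g)$. If case (i) of that theorem held, i.e.\ $\D(g)=G$, then $H\ge \D(g)=G$, contradicting the properness of $H$. If case (iv) held, i.e.\ $G$ and $g$ appear in Table~\ref{tbl}, then by Lemma~\ref{l: terminal table} we have $\Delta_\infty(g)=G$, and again $H\ge \Delta_\infty(g)=G$ is impossible. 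Hence only cases (ii) and (iii) of Theorem~\ref{t: component} can occur, which are exactly the two alternatives listed in the present lemma, with $\D(g)$ contained in $H$ as required.

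The remaining assertion is the final sentence: if $G={}^2\!B_2(q)$ then $H=\D(g)$. Here $q=2^{2a+1}$ with $a\ge1$ (note $q\ge8$, so this is an honest simple Suzuki group), $G$ has a unique class of involutions, and by Lemma~\ref{rooty}(iii), $\D(g)=Z(P)$ where $P$ is a Sylow $2$-subgroup, with $|Z(P)|=q$. We already know $\D(g)\le H$. To get equality I would argue that $H$ cannot be any strictly larger subgroup. Using the subgroup structure of ${}^2\!B_2(q)$ (Suzuki's classification of maximal subgroups), any proper subgroup $H$ of even order properly containing $Z(P)$ is contained in a point stabilizer of the natural $2$-transitive action, i.e.\ in a Borel subgroup $B=P\rtimes C$ with $C$ cyclic of order $q-1$ — or in a small subgroup whose order is not divisible by a large power of $2$. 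The point is to rule out $Z(P)<H$: one shows that every involution in $B\setminus Z(P)$ has strictly smaller $2$-fixity than $g$, so that if $g$ has maximal $2$-fixity and $H\supseteq\langle\text{involutions of maximal }2\text{-fixity in }H\rangle$, then $H$ cannot contain such extra involutions, forcing the $2$-part of $H$ to be exactly $Z(P)$; and then since $H$ is generated (via the binary hypothesis and the component-group machinery, or directly from the classification of the transitive binary actions of the Suzuki groups in \cite{ggLieBin}) by its relevant involutions, $H=Z(P)=\D(g)$.

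Alternatively, and more cleanly, I would simply cite \cite{ggLieBin}: the transitive binary actions of ${}^2\!B_2(q)$ are completely classified there, and the only non-trivial one with even point stabilizer has stabilizer exactly $Z(P)$, the centre of a Sylow $2$-subgroup. This immediately gives $H=Z(P)=\D(g)$ and avoids re-deriving anything about the Suzuki subgroup lattice.

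The main obstacle is the last sentence. Everything up to ``$\D(g)$ is contained in $H$'' is a direct translation of Theorem~\ref{t: component} through Lemma~\ref{l: terminal component}, essentially bookkeeping. The real content is pinning down $H$ exactly in the Suzuki case; the cleanest route is to appeal to the existing classification in \cite{ggLieBin} rather than to reprove it, but if a self-contained argument is wanted then the work lies in verifying that no involution outside $Z(P)$ attains the maximal fixity, which requires a concrete fixed-point computation in the natural doubly transitive representation of ${}^2\!B_2(q)$ on $q^2+1$ points (or in the action on $(G:H)$ directly).
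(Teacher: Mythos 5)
Your proposal matches the paper's proof essentially step for step: the reduction via Theorem~\ref{t: component} together with Lemmas~\ref{l: graph}, \ref{l: terminal component} and \ref{l: terminal table} to eliminate cases (i) and (iv) is exactly the argument given, and for the Suzuki case the paper takes precisely your ``cleaner'' route, noting that $G$ has a single class of involutions and citing the classification of transitive binary actions of $^2\!B_2(q)$ in \cite{ggLieBin}. The self-contained alternative you sketch for the Suzuki case is unnecessary (and would need more care to make rigorous), but the citation route you prefer is correct and is what the paper does.
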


\begin{proof}
The four possibilities for $\Delta(g)$ are given by Theorem~\ref{t: component}. We examine these in turn, noting that Lemma~\ref{l: graph} implies that $\Delta(g)\leq H$.
 
If item (i) of Theorem~\ref{t: component} holds, then we have $H=G$ which is a contradiction. Items (ii) and (iii) are listed in the statement of the lemma.
 
Finally, if item (iv) of Theorem~\ref{t: component} holds, then $G, g$ and $\Delta(g)$ are shown in Table~\ref{tbl}. But, in this case, Lemma~\ref{l: terminal table} implies that $\Delta_\infty(g)=G$ and Lemma~\ref{l: terminal component} implies that $\Delta_\infty(g)\leq H$. Once again we have a contradiction.

To complete the proof of the lemma we use the fact that if $G={^2\!B_2}(q)$, then $G$ has a single conjugacy class of involutions and so the fact that $H=\D(g)$ follows from \cite[Theorem~1.1]{ggLieBin}.
\end{proof}

\section{A partial converse to Theorem~\ref{t: char 2}}\label{s: pc}

The main result of this section, Proposition~\ref{p: pc}, pertains to the groups listed at items (ii) and (iii) of Lemma~\ref{rooty}. We show that particular actions of these groups are binary. These are the first non-regular transitive binary actions to have been described for the families $\PSp_n(q)$, $\PSU_n(q)$ and $^2\!G_2(q)$.

As we mention in the introduction, we conjecture that the actions considered in Proposition~\ref{p: pc} include all of the transitive binary actions of a simple group of Lie type over a field of characteristic $2$, for which a point-stabilizer has even order.

Recall that a group $H$ is a \emph{TI-subgroup} of a group $G$ if $H$ is a subgroup of $G$ with the property that $H\cap H^g$ is either $H$ or $\{1\}$ for $g\in G$. Now we need the following result.

\begin{lem}\label{l: ti}\cite[Lemma~2.3]{ggLieBin}
 Suppose that $G$ acts on $\Omega$, the set of cosets of a TI-subgroup $H$. The action is binary if and only if, given any three distinct conjugates $H_1$, $H_2$ and $H_3$ of $H$, we have $H_1\cap (H_2 H_3)=\{1\}$.
\end{lem}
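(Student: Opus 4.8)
The plan is to prove both implications directly, working throughout with the dictionary between points and stabilizers: since the action on $\Omega=(G:H)$ is transitive, every point stabilizer is a conjugate of $H$, and conversely every conjugate of $H$ is the stabilizer of at least one point; moreover distinct conjugates are stabilizers of distinct points. The \emph{TI}-hypothesis translates into the statement that any two point stabilizers are either equal or intersect trivially. I will also use two elementary normalisations for the binarity condition, valid in any action: replacing $J$ by a translate $J^t$ changes neither $I\stb{2}J$ nor $I\stb{n}J$; and if $I_i=I_j$ then $I\stb{2}J$ forces $J_i=J_j$, so one may delete the $j$-th coordinate and argue by induction on $n$. Hence in the ``hard'' direction we may assume the entries of $I$ are pairwise distinct and, after translating, that $J_1=I_1$.

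\emph{Necessity (contrapositive).} Suppose there are three distinct conjugates $K_1,K_2,K_3$ of $H$ with $K_1\cap(K_2K_3)\neq\{1\}$, and write $k=k_2k_3$ with $1\neq k\in K_1$, $k_2\in K_2$, $k_3\in K_3$. First I would record that pairwise TI forces $k_2\notin K_1$: otherwise $k_3=k_2^{-1}k\in K_1\cap K_3=\{1\}$, whence $k=k_2\in K_1\cap K_2=\{1\}$, a contradiction. Now pick points $\gamma_1,\gamma_2,\gamma_3$ with $G_{\gamma_i}=K_i$ (so the $\gamma_i$ are distinct), set $\delta:=\gamma_1^{k_2}$, and note $\delta=\gamma_1^{k_3^{-1}}$ because $k=k_2k_3$ fixes $\gamma_1$. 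Consider $I=(\gamma_1,\gamma_2,\gamma_3)$ and $J=(\delta,\gamma_2,\gamma_3)$. Then $I\stb{2}J$: the pair $\{1,2\}$ is witnessed by $k_2$, the pair $\{1,3\}$ by $k_3^{-1}$, and the pair $\{2,3\}$ by $1$. But $I\stb{3}J$ would require $g$ with $\gamma_2^g=\gamma_2$ and $\gamma_3^g=\gamma_3$, i.e.\ $g\in K_2\cap K_3=\{1\}$, forcing $\delta=\gamma_1^1=\gamma_1$, contradicting $k_2\notin K_1$. So the action is not binary.

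\emph{Sufficiency.} Assume $H_1\cap(H_2H_3)=\{1\}$ for every triple of distinct conjugates, and let $I\stb{2}J$ with the entries of $I$ distinct and $J_1=I_1$. Put $H_k:=G_{I_k}$, so $H_1=G_{I_1}$. Applying $\stb{2}$ to the pair $\{1,k\}$ produces $h_k\in H_1$ (an element fixing $I_1$, hence in $H_1$) with $I_k^{h_k}=J_k$. Split the indices $k\ge 2$ into $S=\{k:H_k=H_1\}$ and $T=\{k:H_k\neq H_1\}$; by TI, $H_k\cap H_1=\{1\}$ for $k\in T$. For $k\in S$ every $h\in H_1$ satisfies $I_k^{h}=J_k$ (since $h h_k^{-1}\in H_1=H_k$). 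For $k\in T$, $h_k$ is the \emph{unique} element of $H_1$ with this property. The key step is that $h_i=h_j$ for all $i,j\in T$: applying $\stb{2}$ to $\{i,j\}$ gives $g$ with $gh_i^{-1}\in H_i$ and $gh_j^{-1}\in H_j$, so $h_jh_i^{-1}\in H_1\cap(H_jH_i)$, which is $\{1\}$ by the hypothesis when $H_i\neq H_j$ (three distinct conjugates) and by TI when $H_i=H_j$ (then $H_jH_i=H_i$ and $H_1\cap H_i=\{1\}$). Letting $h$ be this common value (or $h=1$ if $T=\varnothing$), one checks $I_k^h=J_k$ for all $k$, so $I\stb{n}J$.

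The main obstacle is purely bookkeeping rather than conceptual: because $H$ need not be self-normalising, several points can share the stabilizer $H_1$, so the split into $S$ and $T$ is genuinely needed, and one must be careful in the sufficiency argument to cover the sub-case $H_i=H_j$ inside $T$ (which is handled by TI rather than by the triple-intersection hypothesis). Everything else is the routine reduction to distinct coordinates and to $J_1=I_1$, together with the translation-invariance of $\stb{r}$.
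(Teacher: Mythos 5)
Your proof is correct: the contrapositive construction for necessity (using $k=k_2k_3$ to build $I=(\gamma_1,\gamma_2,\gamma_3)$, $J=(\gamma_1^{k_2},\gamma_2,\gamma_3)$ with $I\stb{2}J$ but not $I\stb{3}J$) and the sufficiency argument (normalising to distinct entries with $J_1=I_1$, pinning each witness $h_k$ inside $H_1$, and using $H_1\cap(H_jH_i)=\{1\}$ together with the TI property to force a common witness) are both sound, including the careful handling of the sub-case $H_i=H_j$ within $T$. The paper does not reprove this lemma but imports it from \cite{ggLieBin}, and your argument is the natural one for it, so there is nothing further to compare.
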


\begin{lem}\label{l: sl2}
Let $S=\SL_2(q)$ where $q=p^a$ for some prime $p$ and positive integer $a$. Let $H$ be a Sylow $p$-subgroup of $G$. For all $H_1, H_2$ and $H_3$, distinct conjugates of $H$, we have $H_1\cap (H_2 H_3)=\{1\}$.
\end{lem}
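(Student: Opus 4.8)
The plan is to prove this by an explicit matrix computation inside $\SL_2(q)$. The Sylow $p$-subgroups of $\SL_2(q)$ are conjugate to the group $U$ of upper unitriangular matrices, which is abelian (indeed elementary abelian of order $q$), and crucially it is a TI-subgroup of $\SL_2(q)$: two distinct conjugates of $U$ intersect trivially. A conjugate of $U$ is determined by its fixed $1$-space on the natural module, so the conjugates of $U$ are in bijection with the points of the projective line $\PG_1(q)$, namely the $q+1$ subgroups $U_x$ stabilizing the line $\langle v_x\rangle$. By the $2$-transitivity of $\SL_2(q)$ on $\PG_1(q)$ (and the fact that the stabilizer of two points is transitive on the remaining points, which is where $q>2$-type phenomena could in principle intervene, though here nothing is needed beyond $2$-transitivity), we may conjugate so that $H_1 = U$ is upper unitriangular fixing $\langle e_1\rangle$, and $H_2$ is the lower unitriangular group fixing $\langle e_2\rangle$. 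Then $H_3$ is a third conjugate, fixing some line $\langle e_1 + \lambda e_2\rangle$ with $\lambda \neq 0$.

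**The core computation.** With this normalization I would write a general element of $H_1$ as $\begin{pmatrix} 1 & s \\ 0 & 1\end{pmatrix}$, a general element of $H_2$ as $\begin{pmatrix} 1 & 0 \\ t & 1\end{pmatrix}$, and conjugate the standard unipotent by the matrix sending $e_1 \mapsto e_1 + \lambda e_2$ (fixing $e_2$, say) to get an explicit parametrization of $H_3$ as $\begin{pmatrix} 1 + \lambda u & -\lambda^2 u \\ u & 1 - \lambda u\end{pmatrix}$ (characteristic $2$ will let me drop signs, but it is cleaner to keep the argument characteristic-free). The statement $H_1 \cap (H_2 H_3) = \{1\}$ then becomes: if $\begin{pmatrix} 1 & s \\ 0 & 1\end{pmatrix} = \begin{pmatrix} 1 & 0 \\ t & 1\end{pmatrix} M$ with $M \in H_3$, then $s = 0$. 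Multiplying out, the $(2,1)$ and $(2,2)$ entries of the right-hand side give two equations in $t$ and the parameter $u$ of $M$; comparing with the left-hand side ($(2,1)$ entry $0$, $(2,2)$ entry $1$) forces, after using $\lambda \neq 0$, that $u = 0$, hence $M = I$ and then $t = 0$, so $s = 0$. This is the whole content, and it is a short linear algebra exercise.

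**A cleaner alternative.** Rather than the brute-force parametrization, I would actually prefer to argue more structurally: $H_2 H_3$ is a set of size $|H_2||H_3| = q^2$ (since $H_2 \cap H_3 = \{1\}$ by the TI property), and $H_2 H_3 \subseteq \langle H_2, H_3\rangle$. Since $H_2$ and $H_3$ are distinct Sylow $p$-subgroups, $\langle H_2, H_3\rangle = \SL_2(q)$ (the only subgroups of $\SL_2(q)$ containing two distinct Sylow $p$-subgroups and of order divisible by $p^2$... — actually more simply, two distinct Borels generate the whole group, or one invokes Dickson's classification of subgroups of $\SL_2(q)$). One then uses the Bruhat-type decomposition: every element of $\SL_2(q)$ either lies in $H_2$ (the lower-triangular unipotent radical's Borel) or has nonzero lower-left entry, and the product $H_2 H_3$ consists exactly of matrices whose lower-left entry ranges over all of $\F_q$ while the diagonal is constrained — comparing with $H_1$, whose elements are upper unitriangular, forces the intersection to be trivial. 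I will present whichever of these is shortest; the explicit $2\times 2$ computation is probably the most transparent and least prone to hidden hypotheses.

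**Main obstacle.** There is essentially no serious obstacle: the only thing to be careful about is bookkeeping in the conjugation that produces $H_3$ and making sure the argument does not secretly need $q > 2$ or $q > 3$ (it does not — $2$-transitivity of $\SL_2(q)$ on $\PG_1(q)$ holds for all $q$, and the final linear equations are solvable over any $\F_q$). The mild annoyance is simply keeping track of entries in characteristic $2$ versus general characteristic; I would do the computation characteristic-free so the lemma reads cleanly, even though the paper only needs $p = 2$.
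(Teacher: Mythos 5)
Your proposal is correct and is essentially the paper's argument: use $2$-transitivity on the conjugates of $H$ to normalize two of the three subgroups to the upper and lower unitriangular groups, then finish with an explicit $2\times 2$ computation. The only real difference is which pair you normalize: the paper sends $(H_2,H_3)$ to (upper, lower), so that a product $h_2h_3$ of nontrivial elements has trace $2+ab\neq 2$ and hence cannot have order $p$, which rules out membership in \emph{any} conjugate of $H$ at once and avoids the parameter $\lambda$ and the explicit parametrization of $H_3$ that your choice of normalizing $(H_1,H_2)$ forces on you (in which, incidentally, the matrix you display fixes $\langle \lambda e_1+e_2\rangle$ rather than $\langle e_1+\lambda e_2\rangle$ --- harmless, but worth tidying).
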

\begin{proof}
 Since $S$ acts 2-transitively on the set of Sylow $p$-subgroups of $S$, we can assume that $H_2$ is the set of strictly upper-triangular matrices and $H_3$ is the set of strictly lower-triangular matrices. Now consider $h_2\in H_2$ and $h_3\in H_3$ and observe that
\[
 h_2h_3=\begin{pmatrix}
  1 & a \\ 0 & 1
 \end{pmatrix}\begin{pmatrix}
 1 & 0 \\ b & 1 \end{pmatrix} = \begin{pmatrix} 1+ab & a \\ b & 1 \end{pmatrix}.
\]
We wish to determine if $h_2h_3$ can lie in a third conjugate, $H_1$, of $H$. If this were the case, then $h_2h_3$ would be an element of order $p$. This requires that $h_2h_3$ has two eigenvalues equal to $1$ and so the characteristic polynomial is $\lambda^2-2\lambda+1$. But this would imply that $a$ or $b$ is equal to $0$. Thus if we take $h_2$ and $h_3$ to be non-trivial, then $h_2h_3$ is not an element of order $p$. The result follows.
\end{proof}

Lemmas~\ref{l: ti} and \ref{l: sl2} show that the action of $\SL_2(q)$ on the cosets of a Sylow $p$-subgroup is binary. But in contrast, for $q$ odd the action of $\PSL_2(q)$ is not binary, as shown in \cite{ggLieBin}.

\begin{prop}\label{p: pc}
Let $G$ and $H$ be one of the following:
\begin{enumerate}
 \item $G = \PSp_{2n}(q)\,(n\ge 1)$ or $\PSU_n(q)\,(n\ge 3)$ and $H$ is the centre of a long root subgroup of $G$;
 \item  $G = \,^2\!G_2(q)$ or $^2\!B_2(q)$, and $H$ is the centre of a Sylow $p$-subgroup of $G$ with $p=3$ or $2$, respectively.
\end{enumerate}
Then the action of $G$ on the set of right cosets of $H$ is binary if and only if $G$ is not equal to $\PSp_2(q)$ with $q$ odd.
\end{prop}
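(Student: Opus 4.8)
The plan is to use Lemma~\ref{l: ti}, since in every case listed in the proposition $H$ is a TI-subgroup of $G$ (for the centre of a long root subgroup this is classical; for $^2\!G_2(q)$ and $^2\!B_2(q)$ the centre of a Sylow $p$-subgroup is a TI-subgroup by the structure of these groups as in \cite{Suzuki, Ward}). So it suffices to analyse, for three distinct conjugates $H_1,H_2,H_3$ of $H$, whether $H_1\cap(H_2H_3)=\{1\}$, and to show this holds precisely when $G\neq\PSp_2(q)$ with $q$ odd. The key reduction is that all the relevant elements lie inside a subgroup $\SL_2(q)$: the product of a nontrivial element of $H_2$ and a nontrivial element of $H_3$ lies in $\la H_2,H_3\ra$, and in each of these groups, two distinct long root subgroups (resp. two distinct conjugates of $Z(P)$) generate a subgroup isomorphic to $\SL_2(q)$ (or $\PSL_2(q)$), with $H_2,H_3$ the two ``opposite'' root subgroups inside it. This is where the characteristic~$2$ hypothesis, or rather its failure, enters: by Lemma~\ref{l: sl2} the required intersection condition holds inside $\SL_2(q)$ for every prime $p$, whereas in $\PSL_2(q)=\PSp_2(q)$ with $q$ odd the centre of $\SL_2(q)$ collapses and the argument of Lemma~\ref{l: sl2} breaks down --- indeed \cite{ggLieBin} already records that this action is not binary, giving the ``only if'' direction.

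Carrying this out, first I would dispose of the exceptional failure: for $G=\PSp_2(q)=\PSL_2(q)$ with $q$ odd, cite \cite{ggLieBin} for non-binarity, completing one direction. For all remaining cases I would verify the TI property of $H$ and then set up the configuration: fix $H_2,H_3$ distinct conjugates of $H$ and let $S=\la H_2,H_3\ra$. The main structural step is to identify $S$: in $\PSp_{2n}(q)$ and $\PSU_n(q)$ two distinct centres of long root subgroups generate a (possibly central quotient of) $\SL_2(q)$ in which $H_2,H_3$ are opposite root subgroups --- this follows from the standard commutator relations for root subgroups, noting that in characteristic~$2$ a long root subgroup of $\PSU_n(q)$ has order $q$ and equals its own centre so no central extension issue arises; in $^2\!G_2(q)$ and $^2\!B_2(q)$ the analogous statement for $Z(P)$-conjugates follows from the fusion information in \cite{Suzuki, Ward}. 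Once $S\cong\SL_2(q)$ is established with $H_2,H_3$ as the unipotent upper/lower triangular subgroups, Lemma~\ref{l: sl2} gives $H_1'\cap(H_2H_3)=\{1\}$ for every conjugate $H_1'$ of $H$ inside $S$.

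The remaining point is that a third conjugate $H_1$ of $H$ which meets $H_2H_3$ nontrivially must in fact be conjugate \emph{within $S$} to $H_2$ and $H_3$ --- equivalently, any nontrivial element of $H_2H_3$ lying in some conjugate of $H$ generates, together with $H_2$ (or $H_3$), a conjugate of $H$ inside $S$. This is handled by observing that a nontrivial $h_1\in H_1\cap(H_2H_3)$ has order $p$ (since $H_1$ is elementary abelian of exponent $p$), hence is a root element of the appropriate type by the classification of unipotent classes, and a root element lying in $S\cong\SL_2(q)$ generates inside $S$ a Sylow $p$-subgroup of $S$, which is a conjugate of $H$; then Lemma~\ref{l: sl2} applies to $H_1,H_2,H_3$ directly. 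Finally invoke Lemma~\ref{l: ti} to conclude the action is binary.

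I expect the main obstacle to be the structural step identifying $S=\la H_2,H_3\ra$ and, more precisely, checking that an element of $H_2H_3$ of order $p$ really is a root element of the right kind lying inside $S$ (so that the reduction to $\SL_2(q)$ is legitimate) --- this requires a careful case analysis in $\PSp_{2n}(q)$ and $\PSU_n(q)$ using the matrix form of long root elements from the proof of Lemma~\ref{rooty}, and in the Ree and Suzuki cases a direct appeal to the detailed subgroup structure in \cite{Suzuki, Ward}. The $\SL_2$-arithmetic itself is already packaged in Lemma~\ref{l: sl2}, so once the reduction is in place the proof concludes quickly.
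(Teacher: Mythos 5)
Your overall framework (TI-subgroup plus Lemma~\ref{l: ti}, the citation of \cite{ggLieBin} for $\PSp_2(q)$ with $q$ odd and for $^2\!B_2(q)$, and the use of Lemma~\ref{l: sl2} inside $\la H_2,H_3\ra\cong\SL_2(q)$) matches the paper, but your key structural claim --- that \emph{any} two distinct conjugates $H_2,H_3$ of $H$ generate a copy of $\SL_2(q)$ with $H_2,H_3$ as opposite root subgroups --- is false, and this is not a technical verification you deferred but the point where a genuinely different argument is needed. For $G=\PSp_{2n}(q)$ with $n\ge 2$ and $\PSU_n(q)$ with $n\ge 4$ the permutation action on the conjugates of $H$ has rank $3$: besides the ``opposite'' pairs there is a whole orbital of \emph{commuting} pairs of long root centres (e.g.\ in $\Sp_4(q)$ the subgroups $e_1\mapsto e_1+\a f_1$ and $e_2\mapsto e_2+\a f_2$), for which $\la H_2,H_3\ra = H_2\times H_3$ is elementary abelian of order $q^2$ and Lemma~\ref{l: sl2} says nothing. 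Your reduction therefore omits an entire case. The paper closes it by a direct computation: a nontrivial element of $H_2H_3\setminus(H_2\cup H_3)$ has two Jordan blocks of size $2$ on the natural module, hence is not a long root element, so by the TI property no third conjugate $H_1$ can meet $H_2H_3$ nontrivially. Some such argument is indispensable.

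The same objection applies, more severely, to $^2\!G_2(q)$: it is not true that two distinct conjugates of $Z(P)$ sit as opposite unipotent subgroups of a $\PSL_2(q)$ (the elements of $Z(P)$ lie in a specific class of elements of order $3$ whose centralizer is a $3$-group, so they cannot be the unipotent elements of an $\SL_2(q)$-type subgroup centralized by an involution), and the paper does not attempt a structural identification of $\la H_2,H_3\ra$ here at all. Instead it proves $H_2H_3\cap C=(H_2\cup H_3)\setminus\{1\}$ (where $C$ is the class containing the nontrivial elements of $H$) by counting the solutions of $x_1x_2x_3=1$ with $x_i\in C$ via the Frobenius character-sum formula and Ward's character table, and observing that the $(q^3+1)(q-1)(q-2)$ solutions are exactly accounted for by triples $(h(c),h(d),h(-c-d))$ inside single conjugates of $H$. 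Your appeal to ``fusion information in \cite{Suzuki,Ward}'' does not supply a substitute for this step. The parts of your argument that do overlap with the paper (the opposite-pair case, and the observation that a nontrivial element of $H_1\cap H_2H_3$ forces $H_1\le\la H_2,H_3\ra$ by the TI property) are correct, but as written the proof has a genuine gap in both the higher-rank classical cases and the Ree case.
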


Note that $q$ is an arbitrary prime power. When $q$ is even, this lemma gives a partial converse to Theorem~\ref{t: char 2}.

\begin{proof}
 It is well known, and easy to confirm directly, that in every case $H$ is a TI-subgroup of $G$. We will, therefore, prove Proposition~\ref{p: pc} using Lemma~\ref{l: ti}.

 If $G$ is as in item (1), then we suppose, first, that $G=\PSp_{2}(q)=\PSL_2(q)$. In this case the action of $G$ on the set of right cosets of $H$ is binary if and only if $q$ is even, by \cite[Theorem~1.2]{ggLieBin}.

Suppose next that $G=\PSU_3(q)$. Then the action of $G$ on the set of conjugates of $H$ is $2$-transitive and any distinct pair of conjugates of $H$ generate a subgroup isomorphic to $\SL_2(q)$. If $(H_2, H_3)$ is such a pair, then $H_2,H_3$ are Sylow $p$-subgroups of $\langle H_2, H_3\rangle$ and so, if $H_1$ is a third conjugate of $H$ satisfying $H_1\cap H_2 H_3\neq \{1\}$, then $H_1$ must be a subgroup of $\langle H_2, H_3\rangle$ (since $H$ is a TI-subgroup). But now Lemma~\ref{l: sl2} implies that in this case $H_1\cap H_2 H_3=\{1\}$. Thus the criterion of Lemma~\ref{l: ti} is satisfied and we conclude that the action is binary.

Suppose next that $G=\PSp_{2n}(q)$ with $n\geq 2$ or $\PSU_n(q)$ with $n\ge 4$. Then the action of $G$ on the set of conjugates of $H$ has rank $3$ (see \cite{kan_lieb}) and so $G$ has 3 orbits in its action by conjugation on the set of pairs of conjugates of $H$, one of which includes the pair $(H,H)$, which we can ignore.

A second orbit includes the  pair $(H, H^{opp})$ where $H^{opp}$ is the centre of an  `opposite' root subgroup of $H$; in particular $\langle H, H^{opp}\rangle \cong \SL_2(q)$. Now the argument proceeds just as for $\PSU_3(q)$, via Lemma~\ref{l: sl2}, to lead us to conclude that the criterion of Lemma~\ref{l: ti} is satisfied  when $(H_2, H_3)$ is in this orbit.

The third orbit contains all pairs of conjugates of $H$ that commute with each other. We take $G=\PSp_{2n}(q)$ first and display two such conjugates explicitly as follows: we let $\{e_1,\dots, e_n, f_1,\dots, f_n\}$ be a hyperbolic basis for $V=\Fq^n$ with respect to an alternating form $\varphi$ and we set $H_2^\dagger$ to be the long root subgroup consisting of  linear maps $g$ that fix every basis vector except for $e_1$ and for which there exists $\alpha_g \in \Fq$ such that
\[
 e_1^g = e_1+\alpha_gf_1.
\]
Similarly $H_3^\dagger$ is the long root subgroup consisting of  linear maps $g$ that fix every basis vector except for $e_2$ and for which there exists $\alpha_g \in \Fq$ such that
\[
 e_2^g = e_2+\alpha_gf_2.
\]
It is clear that non-trivial elements of $H_2^\dagger$ and $H_3^\dagger$ have a single Jordan block of size $2$, whereas an element of $H_2^\dagger H_3^\dagger\setminus (H_2^\dagger \cup H_3^\dagger)$ has two Jordan blocks of size $2$.

Writing $H_2$ (resp. $H_3$) for the projective image of $H_2^\dagger$ (resp. $H_3^\dagger$) we conclude that the only long root elements in $\langle H_2, H_3\rangle$ lie in $H_2\cup H_3$. Since $H$ is a TI-subgroup, we conclude that if $H_1$ is a third conjugate of $H$, then $H_1\cap H_2 H_3=\{1\}$ and the criterion of Lemma~\ref{l: ti} is verified in this case too. We conclude, therefore, that the action of $G$ on the set of right cosets of $H$ is binary.

The proof for $G=\PSU_n(q)$ is identical except that our basis is written with respect to a Hermitian form and we require that $\alpha_g \in \mathbb{F}_{q^2}$ satisfies $\alpha_g+\alpha_g^q=0$.

If $G$ is as in item (2), then the result for $G=\,^2\!B_2(q)$ follows from \cite[Theorem~1.1]{ggLieBin}. We are left with the case $G = \,^2\!G_2(q)$ for which we use facts from \cite{Ward}. The group $H$ is the centre of a Sylow 3-subgroup of $G$, and all of its non-identity elements lie in the same conjugacy class $C$; a representative is denoted by $X$ in \cite{Ward}.  Moreover $H \cong (\F_q,+)$, and we can parametrize the elements of $H$ as $\{h(c): c \in \F_q\}$, where $h(c)h(d) = h(c+d)$.

Let $H_2,H_3$ be distinct conjugates of $H$. We claim that
\begin{equation}\label{h2h3}
H_2H_3 \cap C = (H_2\cup H_3)\setminus \{1\},
\end{equation}
 from which the result will follow in the usual way from Lemma~\ref{l: ti}. To prove (\ref{h2h3}), we use character theory to count the number of triples in the set $S:= \{(x_1,x_2,x_3) \in C^3 : x_1x_2x_3=1\}$. By the well-known Frobenius formula (see for example \cite[30.4]{James-Liebeck}), we have
\[
|S| = \frac{|C|^3}{|G|} \sum_{\chi \in Irr(G)} \frac{\chi(X)^3}{\chi(1)}.
\]
All the values $\chi(X)$ for $\chi \in Irr(G)$ are listed in \cite[p.87]{Ward}, and we compute from these that
\[
|S| = (q^3+1)(q-1)(q-2).
\]
The number of triples in $S$ of the form $(h(c),h(d),h(-c-d))$ is $(q-1)(q-2)$, and if we multiply this by the number of conjugates of $H$, we obtain $|S|$. It follows that every triple in $S$ is a conjugate of such a triple $(h(c),h(d),h(-c-d))$, and equation (\ref{h2h3}) follows.
\end{proof}

\bibliography{myrefs}
\bibliographystyle{plain}

\end{document}